\title{Remarks on automorphy of residually dihedral representations}
\author{Sudesh Kalyanswamy}
\thanks{Department of Mathematics, UCLA, Los Angeles, USA. \emph{Email}: skalyanswamy@math.ucla.edu}
\date{July 15, 2016}
\DeclareMathOperator{\Ind}{Ind}
\DeclareMathOperator{\ad}{ad}
\DeclareMathOperator{\Frob}{Frob}
\DeclareMathOperator{\GL}{GL}
\DeclareMathOperator{\PGL}{PGL}
\DeclareMathOperator{\CNL}{CNL}
\DeclareMathOperator{\Gal}{Gal}
\DeclareMathOperator{\tr}{tr}
\DeclareMathOperator{\St}{St}
\newcommand{\cO}{{\mathcal O}}
\newcommand{\A}{\mathbb{A}}
\newcommand{\C}{\mathbb{C}}
\newcommand{\Z}{\mathbb{Z}}
\newcommand{\Q}[0]
{
\mathbb{Q}
}
\def\rhobar{ {\overline{\rho}} }
\newcommand{\F}[0]
{
\mathbb{F}
}
\begin{document}
\begin{abstract} We prove automorphy lifting results for  geometric representations $\rho:G_F \rightarrow GL_2(\cO)$, with $F$ a totally real field,  and $\cO$ the ring of integers of a finite extension of $\Q_p$ with  $p$ an odd prime, such that the residual representation $\rhobar$ is  totally odd and induced from a  character of the  absolute Galois group  of the quadratic subfield $K$ of $F(\zeta_p)/F$.
Such representations fail the Taylor-Wiles hypothesis and the patching techniques to prove automorphy  do not work.  We apply this to  automorphy of elliptic curves $E$ over $F$ , when $E$ has no $F$ rational 7-isogeny and such that the image of $G_F$ acting on $E[7]$ normalizes a split Cartan subgroup of $\GL_2(\F_7)$.

 \end{abstract}
\maketitle
\newcommand{\twopartdef}[4]
{
	\left\{
		\begin{array}{ll}
			#1 & \mbox{if } #2 \\
			#3 & \mbox{if } #4
		\end{array}
	\right.
}
\newcommand{\I}[1]
{
\mathfrak{#1}
}

\newcommand{\D}[1]
{
#1^{\vee}
}
\newcommand{\U}[1]
{
	{#1}^{\times}
}

\newcommand{\CharFun}[1]
{
\textbf{1}_{#1}
}

\newcommand{\Al}[0]
{
\mathcal{O}
}
\newcommand{\Mod}[0]
{
\text{ mod }
}
\newcommand{\Minus}[0]
{
\backslash
}
\newcommand{\AC}[1]
{
	\overline{#1}
}
\newcommand{\MatTwo}[4]
{
\left(
\begin{array}{ccc}
#1 & #2 \\
#3 & #4 \\
\end{array}
\right)
}
\newcommand{\MatThree}[9]
{
\left(
\begin{array}{ccc}
#1 & #2 & #3 \\
#4 & #5 & #6 \\
#7 & #8 & #9 \\
\end{array}
\right)
}
\newcommand{\invlim}[1]
{
\lim_{\stackrel{\longleftarrow}{#1}}
}
\newcommand{\Sc}[1]
{
	\mathcal{#1}
	}
\newcommand{\IP}[1]
{
	\left\langle #1 \right\rangle
	}
\theoremstyle{definition}
\newtheorem{theorem}{Theorem}[section]
\newtheorem{lemma}[theorem]{Lemma}
\newtheorem{corollary}[theorem]{Corollary}
\newtheorem{proposition}[theorem]{Proposition}
\newtheorem{definition}[theorem]{Definition}
\newtheorem{example}[theorem]{Example}
\theoremstyle{remark}
\newtheorem{remark}[theorem]{Remark}
\newtheorem*{claim}{Claim}
\newtheorem*{proofofclaim}{Proof of Claim}
\setlength{\parindent}{0cm}

\section{Introduction}

\subsection{The main theorem}

Let $F$ be a totally real field, $p$ be an odd prime, and $\cO$ the ring of integers of a finite extension of $\Q_p$. In proving automorphy of geometric representations $\rho:G_F \rightarrow GL_2(\cO)$ which are residually automorphic, there is an assumption made (the Taylor-Wiles hypothesis)  that the residual representation $\rhobar$ is irreducible when restricted  to $F(\zeta_p)$. In \cite{Thorne}, Thorne has recently weakened this assumption in many cases to simply asking that $\rhobar$ itself is irreducible but making the auxiliary assumption that the quadratic subfield of $F(\zeta_p)/F$ is totally real. \\ \\
 Under other assumptions on $\rhobar$, but still allowing that $\rhobar|_{G_{F(\zeta_p)}}$ is reducible, we prove automorphy lifting results wherein we do not assume that the quadratic subfield $K$ of $F(\zeta_p)/F$ is totally real. The assumption on $\rhobar$ is that there is a (``level raising'') place $v$ of $F$ that splits in $K$, such that the ratio of the eigenvalues of $\rhobar({\rm Frob}_v)$ is $q_v$ with $q_v$ not 1 modulo $p$. This assumption is automatic when $K/F$ is totally real, as  exploiting the oddness of $\rhobar$ we can take any $v$ such that $\rhobar({\rm Frob}_v)$ is conjugate to the image of complex conjugation. \\ 

The main theorem we  prove is the following:
\begin{theorem} Let $F$ be a totally real number field, let $p$ be an odd prime, and let $\rho: G_F \to \GL_2(\overline{\Q}_p)$ be a continuous representation satisfying the following:
\begin{itemize}
\item[(1)] The representation $\rho$ is almost everywhere unramified. 
\item[(2)] For each $v|p$ of $F$, the local representation $\rho|_{G_{F_v}}$ is de Rham. For each embedding $\tau: F \hookrightarrow \overline{\Q}_p$, we have $\text{HT}_{\tau}(\rho) = \{0,1\}$. 
\item[(3)] For each complex conjugation $c \in G_F$, we have $\det \rho(c) = -1$. 
\item[(4)] The residual representation $\bar{\rho}$ is absolutely irreducible, but $\bar{\rho}|_{G_{F(\zeta_p)}}$ is a direct sum of two distinct characters. Further suppose that if $K$ is the unique quadratic subfield of $F(\zeta_p)/F$ and $\bar{\gamma}: G_K \to k^{\times}$ is the ratio of the two characters, then we have $F(\zeta_p) \not \subset K(\bar{\gamma}\epsilon^{-1}) \cap K(\bar{\gamma}\epsilon)$. 
\end{itemize}
Then $\rho$ is automorphic: there exists a cuspidal automorphic representation $\pi$ of $\GL_2(\A_F)$ of weight $2$, an isomorphism $\iota: \overline{\Q}_p \to \C$, and an isomorphism $\rho \cong r_{\iota}(\pi)$. 
\end{theorem}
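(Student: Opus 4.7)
The plan is to adapt the modified Taylor--Wiles--Kisin patching method of Thorne in \cite{Thorne} to the case where the quadratic subfield $K$ of $F(\zeta_p)/F$ is not assumed to be totally real. First I would perform the standard preliminary reductions via solvable base change: pass to a suitable totally real extension $F'/F$ over which the local representations $\rho|_{G_{F'_v}}$ assume a convenient form, the ramification of $\rho$ is controlled, and the hypotheses of (4) continue to hold. Since $\bar\rho \cong \Ind_{G_K}^{G_F} \bar\psi$ for a character $\bar\psi: G_K \to k^\times$, the representation $\bar\rho$ is dihedral and hence residually automorphic via the theta series attached to $\bar\psi$, giving a starting point for the patching argument.

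The heart of the proof is the construction of a global deformation problem $\mathcal{S}$ and the associated patching. Locally one imposes a Fontaine--Laffaille (or ordinary) condition at each $v \mid p$, minimal conditions at the ramification of $\bar\rho$, and crucially a ramified principal-series (Steinberg-type) condition at the distinguished level-raising place $v$ of hypothesis (4). The key fact is that $q_v \not\equiv 1 \pmod p$ ensures the local deformation ring at $v$ is formally smooth of the expected relative dimension, while on the automorphic side it forces the corresponding new-vector to be ramified at $v$. Taylor--Wiles primes are then chosen among primes $w$ that \emph{split in} $K$ and at which $\bar\rho(\Frob_w)$ has distinct eigenvalues; the Chebotarev hypothesis $F(\zeta_p) \not\subset K(\bar\gamma\epsilon^{-1}) \cap K(\bar\gamma\epsilon)$ is precisely what guarantees such primes exist in abundance, by separating the extensions $K(\bar\gamma\epsilon^{\pm 1})$ from $F(\zeta_p)$ inside $G_F$. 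Running Thorne's variant of the Taylor--Wiles--Kisin patching machine then yields $R_{\mathcal{S}} \cong \T_{\mathcal{S}}$, and cyclic descent through $F'/F$ finishes the proof.

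The principal obstacle is the Galois cohomology underlying the patching. In the classical Taylor--Wiles setting one uses absolute irreducibility of $\bar\rho|_{G_{F(\zeta_p)}}$ to force vanishing of the relevant $H^1$ groups, but here the splitting $\bar\rho|_{G_{F(\zeta_p)}} \cong \bar\alpha \oplus \bar\beta$ contributes extra classes to the dual Selmer group $H^1(G_{F,S}, (\ad^0 \bar\rho)^{\ast})$ that cannot be killed by ordinary Taylor--Wiles primes. The role of the level-raising place $v$ is to contribute precisely the extra unit of slack in the global Euler characteristic formula needed to absorb this class, and the auxiliary hypothesis on $\bar\gamma$ guarantees that the obstructing class can in fact be annihilated by an appropriate Taylor--Wiles prime $w$ split in $K$. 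Balancing these dimension counts at every finite patching level and verifying a Nakayama-type criterion for the resulting limit module is where the real technical difficulty lies.
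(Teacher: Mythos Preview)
Your high-level outline (solvable base change, residual automorphy from the induced character, then patching) matches the paper, but the technical core is misdescribed in a way that hides the one genuinely new idea and leaves a real gap.

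First, you conflate the two kinds of auxiliary primes. The decomposition $\ad^0\bar\rho \cong M_0 \oplus M_1$ with $M_0 = k(\delta_{K/F})$ and $M_1 = \Ind_{G_K}^{G_F}\bar\gamma$ induces a splitting of the dual Selmer group, and the two pieces are killed by \emph{different} families of primes. The $M_0(1)$-part is handled by genuine Taylor--Wiles primes ($q_v \equiv 1 \bmod p^{N}$, $\bar\rho(\Frob_v)$ with distinct eigenvalues); this works exactly as in \cite{Thorne} and requires no splitting condition in $K$. The $M_1(1)$-part---the ``obstructing classes'' you mention---is killed by Ramakrishna/Steinberg primes: places $v$ \emph{split in $K$} with $q_v \not\equiv 1 \bmod p$ and with the eigenvalue ratio of $\bar\rho(\Frob_v)$ equal to $q_v$, at which one imposes $\Sc{D}_v^{\St(\alpha_v)}$. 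These are not Taylor--Wiles primes, and one needs a set $Q_0$ of them (one per generator of $H^1_{\Sc{S},T}(M_1(1))$), not a single distinguished level-raising place contributing ``one unit of slack.''

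Second, and more seriously, you omit the step that actually uses hypothesis (4) and constitutes the paper's contribution. For the patching (and the subsequent level raising/lowering) one needs, at each level $N$, Ramakrishna primes $v$ such that the eigenvalue ratio of $\rho_N(\Frob_v)$---not merely $\bar\rho(\Frob_v)$---equals $q_v \bmod \lambda^N$. In \cite{Thorne} this is arranged by taking $\rho_N(\Frob_v)$ conjugate to $\rho_N(c)$, which forces $K$ to be totally real. The paper's replacement (Proposition~2.11) is a Teichm\"uller-lifting trick: hypothesis (4) yields $\sigma_1 \in G_K$ with $\bar\gamma(\sigma_1) = \epsilon(\sigma_1) \not\equiv 1 \bmod p$, and then for suitable $M$ the power $\sigma_{N} = \sigma_1^{q^M}$ has both its eigenvalue ratio under $\rho_N$ and its cyclotomic value equal to the Teichm\"uller lift of $\epsilon(\sigma_1)$ modulo $\lambda^N$. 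An inflation--restriction argument (using $H^1(K_N/F, M_1(1)) = 0$) then lets one multiply by an element of $G_{K_N}$ to make the chosen dual-Selmer cocycle nonzero at $\sigma$. Without this argument your sketch does not explain how the Ramakrishna primes can be chosen compatibly with $\rho_N$ for growing $N$, and the patching cannot proceed.
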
 

The hypothesis that $F(\zeta_p) \not \subset K(\bar{\gamma}\epsilon^{-1}) \cap K(\bar{\gamma}\epsilon)$ is equivalent to the existence of a place $v$ of $F$ such that $q_v$ is not 1 modulo $p$, $v$ splits in $K$, and $\rhobar({\rm Frob}_v)$ has eigenvalues with ratio $q_v$. Furthermore, as remarked above, this hypothesis is automatic when $K$ is totally real as one may, using the oddness of $\rhobar$, take any $v$ such that $\rhobar({\rm Frob}_v)$ is conjugate to $\rhobar(c)$ and $q_v$ is -1 modulo $p$. Thus the  theorem generalizes the main theorem of \cite{Thorne}, and in fact  its  proof is essentially  the same as \cite{Thorne}. In loc. cit., the author synthesizes two methods:

--  the patching method of Taylor-Wiles, which relies on using places $v$ of $F$ such that $q_v$ is $1$ modulo $p^N$ and $\rhobar({\rm Frob}_v)$ has distinct eigenvalues to kill certain elements of the mod $p$ dual Selmer, and

-- a method due to Khare, which proves automorphy of $\rho$ by ``$p$-adic approximation'' using Ramakrishna places $v$, such that $q_v$ is not $1$ modulo $p$ and $\rho_N({\rm Frob}_v)$ has eigenvalues with ratio $q_v$, to perform the task of killing certain elements of mod $p$ dual Selmer which cannot be killed with Taylor-Wiles primes. Here $\rho_N$ denotes the mod $p^N$ reduction of $\rho$. \\ 

The Ramakrishna places $v$ that Thorne used were such that $\rho_N({\rm Frob}_v)$ is conjugate to the image of complex conjugation. To ensure that these places could effectively kill the troublesome part of mod $p$ dual Selmer, he had to impose that the quadratic subfield of $F(\zeta_p)/F$ be totally real, which implies $p \equiv 1 \Mod 4$. \\

The central new remark of this paper is that if we assume there is a place $v$ of $F$ that splits in $K$, such that the ratio of the eigenvalues of $\rhobar({\rm Frob}_v)$ is $q_v$ with $q_v$ not $1$ modulo $p$, then by choosing ``Teichmuller liftings'' one may find, for each $N$, places $v$ such that $q_v$ is not $1$ modulo $p$, $\rho_N({\rm Frob}_v)$ has eigenvalues with ratio $q_v$, and the $v$'s perform the task of killing the troublesome  elements of mod $p$ dual Selmer which cannot be killed with Taylor-Wiles primes. We present the proof of this remark in Proposition 2.11, and it serves as a replacement for Proposition 5.20 in \cite{Thorne} which needed that $K$ be totally real. Note that this assumption we make is automatic when $K$ is totally real, which explains why our result generalizes the main theorem of loc. cit.  \\ 

After this remark the proof of our theorem is identical to that of loc. cit. where patching arguments followed by level raising and lowing arguments are used to conclude the automorphy of $\rho$. \\ 

The case when there is no place $v$ of $F$ that splits in $K$ such that the ratio of the eigenvalues of $\rhobar({\rm Frob}_v)$ is $q_v$ with $q_v$ not 1 modulo $p$ cannot be addressed by the methods of \cite{Thorne} nor by the modifications that we carry out in this paper. In a forthcoming work \cite{KT}  of Khare and Thorne, this case is addressed. Instead of patching and  $p$-adic approximation, the authors use patching together with Wiles' numerical isomorphism criterion and calculation of $\eta$-invariants using monodromy arguments.

\subsection{Automorphy of elliptic curves}
 
We get a modest application to automorphy of elliptic curves over totally real fields.
Current automorphy lifting theorems prove the modularity of elliptic curves $E$ over $F$ which satisfy the ``Taylor-Wiles'' hypothesis at some prime $p \in \{3,5,7\}$, i.e.,  $\bar{\rho}_{E,p}(G_{F(\zeta_p)})$ is absolutely irreducible  for at least one  $p \in \{3,5,7\}$. Recently, in \cite{FLHS}, the authors prove the modularity of all elliptic curves over real quadratic fields. Moreover, they show that over any totally real field $F$, there are only finitely many potentially non-modular elliptic curves over $F$, which necessarily do not satisfy the Taylor-Wiles hypothesis at any prime $p \in \{3,5,7\}$. \\ \\
In \cite{Thorne}, Thorne was able to cut down on this list of potentially non-modular elliptic curves by proving a new automorphy lifting theorem and applying it to the above situation of elliptic curves with $p=5$. However, the results in that paper do not deal with the case $p=7$ because the requirement was that the quadratic subfield of $F(\zeta_p)/F$ be totally real (which implies that $p$ is 1 modulo 4). In this note, we try to deal with the cases when this quadratic subfield is not totally real by examining different sorts of auxiliary primes. We use this theorem to prove:
\begin{theorem} Let $F$ be a totally real field, and let $E$ be an elliptic curve over $F$. Suppose:
\begin{itemize}
\item[(1)] $F \cap \Q(\zeta_7) = \Q$. 
\item[(2)] $E$ has no $F$-rational $7$-isogeny.
\item[(3)] Either $\bar{\rho}_{E,7}(G_{F(\zeta_7)})$ is absolutely irreducible, or it is reducible and is conjugate to a subgroup of a split Cartan subgroup of $\GL_2(\F_7)$. 
\end{itemize}
Then $E$ is modular. 
\end{theorem}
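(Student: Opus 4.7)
The plan is to split hypothesis (3) into its two cases, reducing the second to Theorem 1.1. Hypothesis (2) forces $\bar{\rho}_{E,7}$ to be absolutely irreducible as a $G_F$-representation, and hypothesis (1) identifies the quadratic subfield of $F(\zeta_7)/F$ as $K = F(\sqrt{-7})$ while making $\det \bar{\rho}_{E,7} = \epsilon$ surjective onto $\F_7^\times$.

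If $\bar{\rho}_{E,7}|_{G_{F(\zeta_7)}}$ is absolutely irreducible, then the Taylor-Wiles hypothesis holds at $p=7$; modularity of $E$ would then follow by combining a modularity lifting theorem at $p=7$ with residual modularity of $\bar{\rho}_{E,7}$. The latter is obtained by the standard chain-of-congruences method (as in \cite{FLHS}), beginning from modularity of $\bar{\rho}_{E,3}$ (solvable image, handled by Langlands-Tunnell-Khare-Wintenberger) and using Thorne's theorem at $p=5$ when needed.

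If instead $\bar{\rho}_{E,7}|_{G_{F(\zeta_7)}}$ is reducible and contained in a split Cartan $C$ of $\GL_2(\F_7)$, then since $\bar{\rho}_{E,7}$ is absolutely irreducible, its image lies in the normalizer $N$ of $C$ but not in $C$ itself. Hence $\bar{\rho}_{E,7}^{-1}(C)$ is the unique index-$2$ subgroup of $G_F$ containing $G_{F(\zeta_7)}$, namely $G_K$, and $\bar{\rho}_{E,7}|_{G_K} = \chi_1 \oplus \chi_2$ for characters $\chi_1,\chi_2 : G_K \to \F_7^\times$. I would apply Theorem 1.1 to $\rho = \rho_{E,7}$ at $p = 7$: hypotheses (1)-(3) hold standardly for the $7$-adic Tate module of an elliptic curve, and for (4) the decomposition $\bar{\rho}_{E,7}|_{G_{F(\zeta_7)}} = \chi_1|_{G_{F(\zeta_7)}} \oplus \chi_2|_{G_{F(\zeta_7)}}$ is inherited from the $G_K$-decomposition. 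Both the distinctness requirement on these restricted characters and the auxiliary condition $F(\zeta_7) \not\subset K(\bar\gamma\epsilon)\cap K(\bar\gamma\epsilon^{-1})$ collapse, using $\epsilon|_{G_{F(\zeta_7)}} = 1$, to $\bar\gamma := \chi_1/\chi_2$ being non-trivial on $G_{F(\zeta_7)}$; combined with $\chi_1\chi_2 = \epsilon|_{G_K}$, this becomes the requirement $\chi_1^2 \neq 1$ on $G_{F(\zeta_7)}$.

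The main obstacle will be the degenerate sub-case where $\chi_1^2 = 1$ on $G_{F(\zeta_7)}$, so that the two characters coincide and Theorem 1.1 does not directly apply. In this sub-case $\bar{\rho}_{E,7}(G_{F(\zeta_7)}) \subset \{\pm I\}$, which forces $|\bar{\rho}_{E,7}(G_F)| \leq 12$ and a solvable (indeed dihedral) residual image; residual modularity of $\bar{\rho}_{E,7}$ then comes from Langlands-Tunnell-Khare-Wintenberger, and I would complete the argument either by modularity lifting at $p=3$ or $p=5$ (invoking Thorne's theorem as needed), or by classifying the very few elliptic curves over totally real fields exhibiting such restricted mod-$7$ image and handling them individually.
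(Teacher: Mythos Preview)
Your overall architecture matches the paper's: split on hypothesis (3), cite \cite{FLHS} for the absolutely irreducible case, and verify hypothesis (4) of the main theorem in the split-Cartan case. But there is a genuine gap. You treat the possibility $\chi_1^2|_{G_{F(\zeta_7)}}=1$ (equivalently, $\bar{\rho}(G_{F(\zeta_7)})$ scalar) as a live ``degenerate sub-case'' and propose to dispatch it by Langlands--Tunnell plus lifting at $p\in\{3,5\}$ or by classifying curves. None of that is needed, and your sketch for it would not be easy to complete (you have no control over $\bar{\rho}_{E,3}$ or $\bar{\rho}_{E,5}$ here). The paper simply observes that this sub-case cannot occur: the projective image of $\bar{\rho}_{E,7}$ is dihedral and non-cyclic (indeed $D_3$, by Proposition~9.1 of \cite{FLHS}, though already non-cyclicity follows from irreducibility of $\bar{\rho}$ and the split-Cartan hypothesis), whereas if $\bar{\rho}(G_{F(\zeta_7)})$ were scalar the projective representation would factor through the cyclic group $\Gal(F(\zeta_7)/F)$. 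That one sentence eliminates your ``main obstacle'' entirely.

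A secondary point: your assertion that the auxiliary condition $F(\zeta_7)\not\subset K(\bar\gamma\epsilon)\cap K(\bar\gamma\epsilon^{-1})$ ``collapses, using $\epsilon|_{G_{F(\zeta_7)}}=1$'' to $\bar\gamma|_{G_{F(\zeta_7)}}\neq 1$ is true in this setting, but the justification you give is not a proof: the fields $K(\bar\gamma\epsilon^{\pm1})$ depend on the characters over all of $G_K$, not just over $G_{F(\zeta_7)}$. The paper instead verifies the condition by computing that $\bar\gamma=\bar\chi^2\epsilon^{-1}$ lands in the squares $\{1,2,4\}\subset\F_7^\times$, so $[K(\bar\gamma):K]=3$, and then checks $K(\bar\gamma)\cap F(\zeta_7)=K$ using non-scalarity. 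Your route can be made to work too (e.g.\ by noting $\bar\gamma\epsilon=\chi_1^2$ and $\bar\gamma\epsilon^{-1}=\chi_2^{-2}$ and doing a short case analysis on when both cut out exactly $F(\zeta_7)$), but it needs to be written out.
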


We further note that one may also deduce by similar arguments   the automorphy of elliptic curves $E$ defined over a  totally real  field $F$,  with a prime $p$ such that $E$ has no $F$-rational $p$-isogeny,  $\rho_{E,p}(G_F)$ normalizes a split Cartan subgroup of $\GL_2(\F_p)$,   and $E$ has bad semistable reduction at a place $v$ such that
$q_v$ is not 1 mod $p$ and $v$ splits in the quadratic subfield of $F(\zeta_p)/F$.

\subsection{Acknowledgements} 
I want to thank Chandrashekhar Khare for many useful discussions and his invaluable guidance regarding the methods used in this paper.

\section{Galois Theory}
\subsection{Global Deformation Problem}
Let $F$ be a totally real number field. Let $p$ be an odd prime, and let $E$ be a coefficient field. Fix a continuous, absolutely irreducible representation $\bar{\rho} \colon G_F \to \GL_2(k)$ and a continuous character $\mu: G_F \to \Sc{O}^{\times}$ lifting $\det \bar{\rho}$. We assume $k$ contains the eigenvalues of all elements in the image of $\bar{\rho}$.\\ \\ Let $S$ be a finite set of finite places of $F$, containing the places dividing $p$ and the places at which $\bar{\rho}$ and $\mu$ are ramified. For $v \in S$, fix a ring $\Lambda_v \in \CNL_{\Sc{O}}$. Define $\Lambda = \hat{\otimes}_{v \in S} \Lambda_v$, so that $\Lambda \in \CNL_{\Sc{O}}$. \\ \\ If $v \in S$, we write $\Sc{D}_v^{\Box} \colon \CNL_{\Lambda_v} \to \text{Sets}$ for the functor which takes $R \in \CNL_{\Lambda_v}$ to the set of continuous homomorphisms $r \colon G_{F_v} \to \GL_2(R)$ such that $r \Mod \I{m}_R = \bar{\rho}|_{G_{F_v}}$ and $\det r$ agrees with the composite
$$G_{F_v} \stackrel{\mu|_{G_{F_v}}}{\longrightarrow} \Sc{O}^{\times} \longrightarrow R^{\times}.$$ This functor $\Sc{D}_v^{\Box}$ is represented by an object $R_v^{\Box} \in \CNL_{\Lambda_v}$. 

\begin{definition} Let $v \in S$. A local deformation problem for $\bar{\rho}|_{G_{F_v}}$ is a subfunctor $\Sc{D}_v \subset \Sc{D}_v^{\Box}$ satisfying:
\begin{itemize}
\item[(i)] The subfunctor $\Sc{D}_v$ is represented by a quotient $R_v$ of $R_v^{\Box}$
\item[(ii)] For all $R \in \CNL_{\Lambda_v}$, $a \in \ker(\GL_2(R) \to \GL_2(k))$ and $r \in \Sc{D}_v(R)$, we have $ara^{-1} \in \Sc{D}_v(R)$. 
\end{itemize}
\end{definition}

\begin{definition} A global deformation problem is a tuple
$$\Sc{S} = (\bar{\rho},\mu,S,\{\Lambda_v\}_{v \in S},\{\Sc{D}_v\}_{v \in S}),$$ where all the notation is as above, and $\Sc{D}_v$ is a local deformation problem for $\bar{\rho}|_{G_{F_v}}$ for $v \in S$. 
\end{definition}
\subsection{Galois Cohomology}
Given a finite place $v$ of $F$ and a local deformation problem $\Sc{D}_v$, we denote by $\Sc{L}_v$ the tangent space of the local deformation problem as a subspace $\Sc{L}_v \subset H^1(F_v,\ad^0 \bar{\rho})$. We also let $\Sc{L}_v^{\perp} \subset H^1(F_v,\ad^0 \bar{\rho}(1))$ be the annihilator of $\Sc{L}_v$ induced by the perfect pairing of Galois modules:
$$\ad^0 \bar{\rho} \times \ad^0 \bar{\rho}(1) \to k(1), \quad (X,Y) \mapsto \tr(XY).$$ 
\begin{definition} Given a global deformation problem $\Sc{S} = (\bar{\rho},\mu,S,\{\Lambda_v\}_{v \in S},\{\Sc{D}_v\}_{v \in S})$, we define the dual Selmer group
$$H^1_{\Sc{S},T}(\ad^0 \bar{\rho}(1)) = \ker \left( H^1(F_S/F,\ad^0 \bar{\rho}(1)) \to \prod_{v \in S-T} H^1(F_v,\ad^0 \bar{\rho}(1))/\Sc{L}_v^{\perp} \right).$$
\end{definition}

Let us now take a look at the specific deformation problem we want to consider.
\subsection{Special Deformations, case $q_v \not \equiv 1 \Mod p$}
Let $v \in S$ be a prime not dividing $p$, and suppose that $q_v \not \equiv 1 \Mod p$. Suppose further that $\bar{\rho}|_{G_{F_v}}$ is unramified, and that $\bar{\rho}(\Frob_v)$ has two distinct eigenvalues $\alpha_v ,\beta_v \in k$ such that $\alpha_v/\beta_v = q_v$. Let $\Lambda_v = \Sc{O}$. We define a subfunctor $\Sc{D}_v^{\St(\alpha_v)} \subset \Sc{D}_v^{\Box}$ directly. Let $R \in \CNL_{\Sc{O}}$ and let $r: G_{F_v} \to \GL_2(R)$ be an element of $\Sc{D}_v^{\Box}$. If $\phi_v \in G_{F_{v}}$ is a choice of geometric Frobenius, then by Hensel's lemma the characteristic polynomial of $r(\phi_v)$ factors as $(X-A_v)(X-B_v)$, where $A_v,B_v \in R^{\times}$ with $A_v$ lifting $\alpha_v$ and $B_v$ lifting $\beta_v$. We will say $r \in \Sc{D}_v^{\St(\alpha_v)}$ if $A_v = q_vB_v$ and $I_{F_v}$ acts trivially on $(r(\phi_v)-A_v)R^2$, which is a direct summand $R$-submodule of $R^2$. One checks that this condition is independent of the choice of $\phi_v$. 
\begin{proposition} The functor $\Sc{D}_v^{\St(\alpha_v)}$ is a local deformation problem. The representing object $R_v^{\St(\alpha_v)}$ is formally smooth over $\Sc{O}$ of dimension $4$. 
\end{proposition}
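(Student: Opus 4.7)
The plan is to verify the axioms of a local deformation problem and then exhibit $R_v^{\St(\alpha_v)}$ as a power series ring by parametrising framed Steinberg deformations explicitly.

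First I would establish strict conjugation invariance and relative representability. The hypothesis $q_v \not\equiv 1 \pmod p$ gives $\alpha_v \neq \beta_v$ in $k$, so by Hensel's lemma the characteristic polynomial of $r(\phi_v)$ factors canonically as $(X - A_v)(X - B_v)$ with $A_v, B_v$ the unique lifts of $\alpha_v, \beta_v$. Hence $A_v$ and $B_v$ are intrinsic to the conjugacy class of $r(\phi_v)$, and the condition $A_v = q_v B_v$ is a conjugation-invariant polynomial condition in $\tr r(\phi_v)$ and $\det r(\phi_v)$. The module $L = (r(\phi_v) - A_v)R^2$ is the $B_v$-eigenspace of $r(\phi_v)$, a rank-one direct summand because $A_v - B_v = B_v(q_v - 1) \in R^\times$; conjugation by $a \in \ker(\GL_2(R) \to \GL_2(k))$ sends $L$ to $aL$, so the inertia-triviality of the action on $L$ is preserved. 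Independence from the choice of geometric Frobenius follows once inertia-triviality is imposed. Both conditions cut out a closed subfunctor of $\Sc{D}_v^\Box$, giving representability of $\Sc{D}_v^{\St(\alpha_v)}$ by a quotient $R_v^{\St(\alpha_v)}$ of $R_v^\Box$.

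Next I would analyse the structure of Steinberg deformations. Because $v \nmid p$ and $R$ is pro-$p$, $r$ factors through the tame quotient of $G_{F_v}$, topologically generated by $\phi_v$ and a generator $\tau_v$ of the pro-$p$ part of tame inertia, subject to $\phi_v \tau_v \phi_v^{-1} = \tau_v^{q_v}$. Under the Steinberg condition, $L$ is $G_{F_v}$-stable (an eigenspace of $r(\phi_v)$ carrying trivial inertia action), so in a basis adapted to $L$ the representation takes the block-triangular form $\begin{pmatrix} \chi_1 & \ast \\ 0 & \chi_2 \end{pmatrix}$, where $\chi_2$ is the unramified character $\phi_v \mapsto B_v$ (determined by $\mu$ and the Steinberg relation) and $\chi_1 = \mu \chi_2^{-1}$. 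In the framed setting the deformation data consist of the line $L \subset R^2$ lifting the $\beta_v$-eigenspace of $\bar\rho(\phi_v)$, the extension class of $\chi_1$ by $\chi_2$, and the change of basis from the adapted basis back to the standard framing.

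Finally I would verify formal smoothness and compute the dimension by the infinitesimal lifting criterion. Given a small surjection $R' \twoheadrightarrow R$ in $\CNL_\cO$ and $r \in \Sc{D}_v^{\St(\alpha_v)}(R)$, lift in stages the line $L$ to $L' \subset (R')^2$ (unobstructed, as lines lifting a fixed line form a formally smooth parameter space), the characters $\chi_1, \chi_2$ (essentially canonical as they are defined over $\cO$), and the extension class (the obstruction lies in a local $H^2$ that vanishes by local Tate duality using $q_v \not\equiv 1 \pmod p$). Combined with a direct tangent-space count, this identifies $R_v^{\St(\alpha_v)}$ with a power series ring over $\cO$ of Krull dimension $4$. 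The main obstacle is the bookkeeping in this last step, balancing the line-variation, framing-variation, and extension-class contributions so that the total parameter space matches the claimed dimension.
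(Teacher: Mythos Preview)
The paper does not supply its own proof of this proposition; it is stated as a known fact, established in the literature on Steinberg-type local deformation rings (see e.g.\ the corresponding discussion in \cite{Thorne}). So there is no argument in the paper to compare against. Your overall strategy---check conjugation invariance and closedness of the defining conditions, pass to the tame quotient, put $r$ in triangular form with respect to the $B_v$-eigenline $L$, and then lift the data stage by stage---is the standard one and is essentially correct in outline. (There is a small bookkeeping slip: in your upper-triangular matrix the subrepresentation is the character in the $(1,1)$-slot, so if $L$ is the sub carrying $B_v$ you want $\chi_1(\phi_v)=B_v$, not $\chi_2$.)

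There is, however, a genuine error in the smoothness step. You claim that the obstruction to lifting the extension class lies in a local $H^2$ that \emph{vanishes} by Tate duality using $q_v\not\equiv 1\pmod p$. This is false. With $L$ the $B_v$-eigenline as sub and quotient character sending $\phi_v\mapsto A_v$, the extension class lives in $H^1(G_{F_v},\chi_L\chi_Q^{-1})$, and $(\chi_L\chi_Q^{-1})(\phi_v)=B_v/A_v=q_v^{-1}$, so $\chi_L\chi_Q^{-1}$ is exactly the cyclotomic character. Hence the relevant obstruction group is $H^2(G_{F_v},k(1))\cong k$, which is \emph{never} zero (it computes the $p$-torsion of the local Brauer group), regardless of $q_v\bmod p$. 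More generally $H^2(G_{F_v},\ad^0\bar\rho)\neq 0$ here. What is true is that the obstruction \emph{class} vanishes: by Kummer theory and the hypothesis $p\nmid q_v-1$, one has $H^1(G_{F_v},R(1))\cong R$ (free of rank one) for every $R\in\CNL_\cO$, so the map $H^1(G_{F_v},R'(1))\to H^1(G_{F_v},R(1))$ induced by a small surjection $R'\twoheadrightarrow R$ is just $R'\to R$ and is surjective. Equivalently, and more cleanly, one writes down the universal Steinberg lift explicitly over a power series ring in three variables over $\cO$ (one variable for the monodromy/extension parameter, two for the framing modulo the one-dimensional centraliser of $\bar\rho(\phi_v)$ in $\ad^0$), which simultaneously gives formal smoothness and the Krull dimension $4$.
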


\subsection{Existence of Auxiliary Primes}
Continue with the notation from the previous section, and assume further that $\bar{\rho}$ is totally odd, i.e. that $\mu(c) = -1$ for all choices of complex conjugation $c \in G_F$. Write $\zeta_p \in \overline{F}$ for a primitive $p$-th root of unity, and now fix a choice of complex conjugation $c \in G_F$. \\ \\We will assume that $\bar{\rho}|_{G_{F(\zeta_p)}}$ is the direct sum of two distinct characters. By Clifford theory, we know that $\bar{\rho}$ is induced from a continuous character $\bar{\chi}: G_K \to k^{\times}$, where $K$ is the unique quadratic subfield of $F(\zeta_p)/F$. That is,
$\bar{\rho} \cong \Ind_{G_K}^{G_F} \bar{\chi}$. Write $w \in G_F$ for a fixed choice of element with nontrivial image in $\Gal(K/F)$. We can assume that, possibly after conjugation, that $\bar{\rho}$ has the form:
$$\bar{\rho}(\sigma) = \MatTwo{\bar{\chi}(\sigma)}{0}{0}{\bar{\chi}^w (\sigma)}, \quad \text{ for } \sigma \in G_K,$$
$$\bar{\rho}(w) = \MatTwo{0}{\bar{\chi}(w^2)}{1}{0}.$$

Now let $\bar{\gamma} = \bar{\chi}/\bar{\chi}^w$. By assumption, $\bar{\gamma}$ is nontrivial, even after restriction to $G_{F(\zeta_p)}$. We have the following:
\begin{lemma} We have that $\ad^0 \bar{\rho}$ decomposes as $\ad^0 \bar{\rho} \cong k(\delta_{K/F}) \oplus \Ind_{G_K}^{G_F} \bar{\gamma}$ as a $G_F$-module, where $\delta_{K/F}: \Gal(K/F) \to k^{\times}$ is the unique nontrivial character. 
\end{lemma}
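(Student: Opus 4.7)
The plan is to compute $\ad^0 \bar{\rho}$ directly using the matrix realization of $\bar\rho$ given just above the lemma, and read off the decomposition from the explicit action.

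Decompose $\ad^0\bar\rho$ as a $k$-vector space into the subspace $D$ of trace-zero diagonal matrices (spanned by $H = \bigl(\begin{smallmatrix} 1 & 0 \\ 0 & -1\end{smallmatrix}\bigr)$) and the subspace $U$ of off-diagonal matrices. Both are stable under $G_K$, since $\bar\rho|_{G_K}$ is diagonal. For $\sigma\in G_K$, a direct calculation with $\bar\rho(\sigma) = \text{diag}(\bar\chi(\sigma),\bar\chi^w(\sigma))$ gives
\[
\bar\rho(\sigma)\MatTwo{a}{b}{c}{-a}\bar\rho(\sigma)^{-1} = \MatTwo{a}{\bar\gamma(\sigma)\,b}{\bar\gamma(\sigma)^{-1} c}{-a},
\]
so $G_K$ acts trivially on $D$, and $U = k\cdot e_{12}\oplus k\cdot e_{21}$ decomposes under $G_K$ as $k(\bar\gamma)\oplus k(\bar\gamma^{-1})$.

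Next I would compute the action of $w$. Using $\bar\rho(w)=\bigl(\begin{smallmatrix} 0 & \bar\chi(w^2) \\ 1 & 0\end{smallmatrix}\bigr)$ and its inverse $\bigl(\begin{smallmatrix} 0 & 1 \\ \bar\chi(w^2)^{-1} & 0\end{smallmatrix}\bigr)$, a short calculation shows
\[
\bar\rho(w)\MatTwo{a}{b}{c}{-a}\bar\rho(w)^{-1} = \MatTwo{-a}{\bar\chi(w^2)\,c}{\bar\chi(w^2)^{-1}b}{a}.
\]
Hence $w$ acts as $-1$ on $D$, which identifies $D \cong k(\delta_{K/F})$ as $G_F$-modules (the $G_K$ action is trivial, and the quotient $\Gal(K/F)$ acts via the sign character). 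Also $w$ swaps $U$'s two $G_K$-eigenlines $k\cdot e_{12}$ and $k\cdot e_{21}$ (up to the scalar $\bar\chi(w^2)$, which only rescales the chosen basis vector).

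Finally I would argue that $U\cong \Ind_{G_K}^{G_F}\bar\gamma$. Using $\bar\chi^w(\sigma)=\bar\chi(w^{-1}\sigma w)$ we get $\bar\gamma^w = \bar\gamma^{-1}$, so the two $G_K$-characters appearing in $U$ are precisely $\bar\gamma$ and its $w$-conjugate $\bar\gamma^w$. Since $w$ interchanges the two lines and squares to an element of $G_K$, the standard characterization of an induced representation (a $G_F$-representation $V$ of dimension $2$ whose restriction to $G_K$ is $\bar\gamma\oplus\bar\gamma^w$, together with $V^{G_K\text{-line}\ \bar\gamma}$ sent by $w$ isomorphically onto the $\bar\gamma^w$-line) identifies $U\cong\Ind_{G_K}^{G_F}\bar\gamma$. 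Combining, $\ad^0\bar\rho \cong k(\delta_{K/F})\oplus \Ind_{G_K}^{G_F}\bar\gamma$. There is no real obstacle here; the only thing to be careful about is keeping the convention for $\bar\chi^w$ and $\bar\gamma^w$ consistent so that the $w$-action visibly implements the induction from $G_K$.
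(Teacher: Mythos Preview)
Your proof is correct: the explicit matrix computation with the basis $E,H,F$ is exactly the natural way to verify this decomposition, and your calculations of the $G_K$-action and the $w$-action are accurate. The paper itself states this lemma without proof (it is standard, and also appears in the cited paper of Thorne), so there is nothing to compare against; your argument would serve perfectly well as the omitted proof.
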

From now on, we will let $M_0 = k(\delta_{K/F})$ and $M_1 = \Ind_{G_K}^{G_F} \bar{\gamma}$. Fix the standard basis for $\ad^0 \bar{\rho}$:
$$E = \MatTwo{0}{1}{0}{0},\quad H = \MatTwo{1}{0}{0}{-1},\quad F = \MatTwo{0}{0}{1}{0}.$$ If $M \in \ad^0 \bar{\rho}$, we write $k_M \subset \ad^0 \bar{\rho}$ for the line that it spans. \\ \\ We have the following easy lemma:
\begin{lemma} Let $v \nmid p$ be a finite place of $F$ which splits in $K$, and suppose that the local deformation problem $\Sc{D}_v = \Sc{D}_v^{\St(\alpha_v)}$ is defined. 
\begin{itemize}
\item[(1)] The subspace $\Sc{L}_v \subset H^1(F_v,\ad^0 \bar{\rho})$ respects the decomposition $\ad^0 \bar{\rho} = M_0 \oplus M_1$. That is,
$$\Sc{L}_v = (\Sc{L}_v \cap H^1(F_v,M_0)) \oplus (\Sc{L}_v \cap H^1(F_v,M_1)).$$
\item[(2)] The subspace $\Sc{L}_v^{\perp} \subset H^1(F_v,\ad^0 \bar{\rho}(1))$ respects the decomposition $\ad^0 \bar{\rho}(1) = M_0(1) \oplus M_1(1)$. 
\end{itemize}
\end{lemma}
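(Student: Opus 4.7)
The plan is to prove (1) by an explicit computation of $\mathcal{L}_v$ at cocycle level and then deduce (2) from the observation that the trace pairing on $\ad^0 \bar\rho$ is already block-diagonal with respect to $M_0 \oplus M_1$.

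First, since $v$ splits in $K$, the inclusion $G_{F_v} \subset G_K$ yields $\bar\rho|_{G_{F_v}} = \bar\chi|_{G_{F_v}} \oplus \bar\chi^w|_{G_{F_v}}$ and hence a finer $G_{F_v}$-decomposition
\[
\ad^0 \bar\rho|_{G_{F_v}} = k_H \oplus k_E \oplus k_F,
\]
on which $G_{F_v}$ acts by $1$, $\bar\gamma|_{G_{F_v}}$, and $\bar\gamma^{-1}|_{G_{F_v}}$, respectively; this refines $M_0|_{G_{F_v}} = k_H$ and $M_1|_{G_{F_v}} = k_E \oplus k_F$. I will represent a tangent class by a cocycle $\phi \colon G_{F_v} \to \ad^0 \bar\rho$, expand it as $\phi(\sigma) = x(\sigma)H + y(\sigma) E + z(\sigma) F$, and package it as the first-order deformation $r = (1 + t\phi)\bar\rho$ over $R = k[t]/(t^2)$.

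Next I would translate the two defining conditions of $\mathcal{D}_v^{\St(\alpha_v)}$ into conditions on $(x,y,z)$. Since $\det r = \mu$ is fixed and $\bar\rho(\phi_v) = \operatorname{diag}(\alpha_v, \beta_v)$, computing $\tr r(\phi_v)$ and $\det r(\phi_v)$ and applying Hensel produces the eigenvalues $A_v = \alpha_v(1 + t x(\phi_v))$ and $B_v = \beta_v(1 - t x(\phi_v))$, so the condition $A_v = q_v B_v$ combined with $\alpha_v = q_v\beta_v$ forces $x(\phi_v) = 0$. A short matrix calculation then shows that, independently of this, $(r(\phi_v) - A_v)R^2$ is the rank-one direct summand generated by $v_0 = \tfrac{t y(\phi_v)\beta_v}{\beta_v - \alpha_v} e_1 + e_2$ (using that $\beta_v - \alpha_v$ is a unit because $q_v \not\equiv 1$). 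Since $\bar\rho$ is unramified at $v$, for $\sigma \in I_{F_v}$ one has $r(\sigma) = 1 + t\phi(\sigma)$, and the equation $r(\sigma) v_0 = v_0$ collapses to the two conditions $y(\sigma) = 0$ and $x(\sigma) = 0$. Thus $\mathcal{L}_v$ consists of exactly the classes satisfying $x|_{G_{F_v}} = 0$ and $y|_{I_{F_v}} = 0$, with no condition whatever on $z$; this set is manifestly the direct sum of its intersections with $H^1(F_v, M_0)$ (which turns out to be $0$) and $H^1(F_v, M_1)$, yielding (1).

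For (2), I would verify that $\tr(HE) = \tr(HF) = 0$ while $\tr(H^2) = 2 \neq 0$ in $k$ (using $p$ odd), so $M_0$ and $M_1$ are orthogonal under the trace pairing. Consequently the local Tate cup product splits as an orthogonal direct sum of perfect pairings between $H^1(F_v, M_i)$ and $H^1(F_v, M_i(1))$ for $i = 0, 1$, and the annihilator of $\mathcal{L}_v = \mathcal{L}_{v,0} \oplus \mathcal{L}_{v,1}$ decomposes accordingly. The main obstacle is the bookkeeping in the previous paragraph: correctly disentangling how condition (a) contributes only the single constraint $x(\phi_v) = 0$ while condition (b) contributes exactly $x|_{I_{F_v}} = 0$ and $y|_{I_{F_v}} = 0$, leaving $z$ completely unconstrained.
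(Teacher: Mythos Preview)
Your proof is correct and in fact more self-contained than the paper's. The paper argues as follows: it first observes (as you do) that $v$ splitting in $K$ gives $M_0 = k_H$ and $M_1 = k_E \oplus k_F$ as $k[G_{F_v}]$-modules; it then disposes of the case $q_v \equiv -1 \pmod p$ by citing Thorne's Lemma~5.18, and for $q_v \not\equiv \pm 1 \pmod p$ it simply asserts that $\mathcal{L}_v$ is one-dimensional and contained in $H^1(F_v, M_1)$, so that the decomposition is trivially $\mathcal{L}_v = 0 \oplus \mathcal{L}_v$. Part (2) is dismissed with ``dual to the first''.

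Your approach is genuinely different in that you do not split into cases or invoke outside results: the explicit cocycle computation of the two defining conditions of $\mathcal{D}_v^{\St(\alpha_v)}$ works uniformly for all $q_v \not\equiv 1 \pmod p$ and shows directly that the conditions cutting out $\mathcal{L}_v$ at the cocycle level are separately conditions on the $k_H$-component (namely $x=0$) and on the $k_E \oplus k_F$-component (namely $y|_{I_{F_v}}=0$), whence the splitting. This buys you a proof that is independent of Thorne's lemma and handles both cases at once; the paper's approach is terser but relies on knowing the dimension of $\mathcal{L}_v$ in advance. Your treatment of (2) via the explicit orthogonality $\tr(HE)=\tr(HF)=0$ is exactly the content behind the paper's one-word ``dual''. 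One small remark: your identification of which of the two lines $k_E$, $k_F$ carries the unconstrained component $z$ appears to be the opposite of the paper's labeling, but this is only a Frobenius-convention discrepancy and is irrelevant to the statement of the lemma.
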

\begin{proof} The second part is dual to the first, so we only prove the first part. The fact that $\Sc{D}_v^{\St(\alpha_v)}$ is defined means $q_v \not \equiv 1 \Mod p$, that $\bar{\rho}|_{G_{F_v}}$ is unramified, and that $\bar{\rho}(\Frob_v)$ takes two distinct eigenvalues $\alpha_v,\beta_v \in k$ with $\alpha_v/\beta_v = q_v$. The fact that $v$ splits in $K$ means $M_0 = k_H$ and $M_1 = k_E(\epsilon) \oplus k_F(\epsilon^{-1})$ as $k[G_{F_v}]$-modules. The case $q_v \equiv -1 \Mod p$ was proved in \cite{Thorne}. If $q_v \not \equiv  \pm 1 \Mod p$, then $\Sc{L}_v$ is $1$-dimensional, and is contained in $H^1(F_v,M_1)$, being spanned by $H^1(F_v,k_E(\epsilon))$. 
\end{proof}
\begin{remark} The difference between this and the corresponding Lemma 5.18 from \cite{Thorne} is that we do not make the assumption that the inducing field $K$ is totally real. However, we do need to make sure that we choose primes of $F$ which split in $K$ for the rest of the method to work. 
\end{remark}
Let $\Sc{S} = (\bar{\rho},\mu,S,\{\Lambda_v\}_{v \in S},\{\Sc{D}_v\}_{v \in S})$ be a global deformation problem, and let $T \subset S$ be a subset containing $S_p$ (the set of places above $p$). Suppose that for $v \in S-T$, the local deformation problem $\Sc{D}_v = \Sc{D}_v^{\St(\alpha_v)}$. The lemma implies we can decompose
$$H^1_{\Sc{S},T}(\ad^0 \bar{\rho}(1)) = H^1_{\Sc{S},T}(M_0(1)) \oplus H^1_{\Sc{S},T}(M_1(1)).$$
We now show that we can kill the $M_1(1)$ portion of dual Selmer using the special deformation problem we defined in the previous section, and then kill the $M_0(1)$ portion using traditional Taylor-Wiles primes. 

\subsection{Killing the $M_1(1)$ portion}
First, we show that lemmas 5.21, 5.22, and 5.23 of \cite{Thorne} continue to hold even if $K$ is not totally real. Indeed, the only one which requires proof is the second, since this is the only place where Thorne used this assumption. However, we will need to impose an additional restriction. We state the other two lemmas here for convenience. 
\begin{lemma} Let $\Gamma$ be a group, and $\alpha: \Gamma \to k^{\times}$ a character. Let $k' \subset k$ be the subfield generated by the values of $\alpha$. Then $k'(\alpha)$ is a simple $\F_p[\Gamma]$-module. If $\beta: \Gamma \to k^{\times}$ is another character, then $k'(\alpha)$ is isomorphic to a $\F_p[\Gamma]$-submodule of $k(\beta)$ if and only if there is an automorphism $\tau$ of $k$ such that $\beta = \tau \circ \alpha$. 
\end{lemma}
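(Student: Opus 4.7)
My plan is to handle simplicity, the ``if'' direction, and the ``only if'' direction separately, with the last being the main step. For simplicity, let $W \subset k'(\alpha)$ be a nonzero $\F_p[\Gamma]$-submodule and pick $0 \neq w \in W$. Then $\alpha(\gamma) w \in W$ for every $\gamma$, so $W$ contains the $\F_p$-span of $\alpha(\Gamma) \cdot w$. Since $\alpha(\Gamma)$ is multiplicatively closed, its $\F_p$-span inside $k$ is a finite $\F_p$-subalgebra, and being a finite integral domain it is a subfield, necessarily equal to $k'$ by the very definition of $k'$. Hence $W \supset k' \cdot w = k'(\alpha)$.

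The ``if'' direction is immediate: if $\beta = \tau \circ \alpha$ for some $\tau \in \Aut(k)$, then the restriction of $\tau$ to $k'$ is an $\F_p$-linear injection $k'(\alpha) \to k(\beta)$, and it is $\Gamma$-equivariant because $\tau(\alpha(\gamma) x) = \beta(\gamma) \tau(x)$.

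For the ``only if'' direction, suppose $\phi: k'(\alpha) \hookrightarrow k(\beta)$ is an $\F_p[\Gamma]$-embedding. Extending scalars along the faithfully flat inclusion $\F_p \hookrightarrow k$ gives an injective $k[\Gamma]$-map
\[
\phi_k: k \otimes_{\F_p} k'(\alpha) \hookrightarrow k \otimes_{\F_p} k(\beta).
\]
The standard decomposition $k \otimes_{\F_p} k' \cong \prod_{\iota: k' \hookrightarrow k} k$ (and its analogue for $k \otimes_{\F_p} k$) identifies the domain with $\bigoplus_{\iota} k(\iota \circ \alpha)$ and the codomain with $\bigoplus_{\sigma \in \Aut(k)} k(\sigma \circ \beta)$, each summand a one-dimensional simple $k[\Gamma]$-module. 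The summand $k(\alpha)$ of the domain, indexed by the inclusion $k' \hookrightarrow k$, maps nontrivially under $\phi_k$, so its image has nonzero projection onto some summand $k(\sigma \circ \beta)$; since both sides are one-dimensional over $k$, this projection is an isomorphism of $k[\Gamma]$-modules, forcing $\alpha = \sigma \circ \beta$. Taking $\tau = \sigma^{-1}$ gives $\beta = \tau \circ \alpha$. The main subtlety lies in setting up the tensor-product decomposition and correctly identifying each simple summand with a twist of $\alpha$ or $\beta$ by a field automorphism of $k$; once this is in place the rest of the argument is formal.
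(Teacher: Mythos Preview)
Your proof is correct. The paper itself does not supply an argument for this lemma; it simply refers the reader to Lemma~5.21 of \cite{Thorne}. Your write-up therefore fills in a genuine gap rather than paralleling anything printed here.

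A few remarks on the details, all of which you have handled properly. For simplicity, the point that the $\F_p$-span of the multiplicative group $\alpha(\Gamma)$ inside $k$ is already a subfield (hence equals $k'$) is exactly what is needed; since $1\in\alpha(\Gamma)$ and products of group elements are group elements, the span is a finite sub-$\F_p$-algebra of $k$ and thus a field. For the ``only if'' direction, your base-change argument is the natural one: $k$ is finite, so $k\otimes_{\F_p}k'$ and $k\otimes_{\F_p}k$ split as products of copies of $k$ indexed by embeddings (equivalently, by elements of $\Gal(k/\F_p)$ in the second case), and the $\Gamma$-action on the $\iota$-th factor is via $\iota\circ\alpha$ as you say. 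Injectivity of $\phi_k$ follows from flatness of $k$ over $\F_p$, and the conclusion $\alpha=\sigma\circ\beta$ for some $\sigma\in\Aut(k)$ then drops out by comparing one-dimensional $k[\Gamma]$-modules. Inverting $\sigma$ gives the desired $\tau$.
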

\begin{proof} See the proof of Lemma 5.21 in \cite{Thorne}.
\end{proof}
\begin{lemma} Let $K(\overline{\gamma})$ be the fixed field of $\ker \bar{\gamma}$, let $L = F(\zeta_p) \cap K(\overline{\gamma})$ and assume that $\# \epsilon(G_L )> 1$. Then the $\F_p[G_K]$-module $k(\epsilon \overline{\gamma})$ has no Jordan-Holder factors in common with $k$, $k(\overline{\gamma})$, or $k(\overline{\gamma}^{-1})$. The characters $\epsilon \overline{\gamma}$ and $\overline{\gamma}$ are nontrivial. 
\end{lemma}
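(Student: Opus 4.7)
The plan is to reduce everything to the preceding lemma. For any character $\alpha \colon G_K \to k^{\times}$, let $k_\alpha \subset k$ denote the $\F_p$-subfield generated by $\alpha(G_K)$; the preceding lemma asserts that $k_\alpha(\alpha)$ is a simple $\F_p[G_K]$-module, and that every Jordan-Hölder factor of $k(\alpha)$ is isomorphic to it. Consequently $k(\epsilon\bar{\gamma})$ shares a Jordan-Hölder factor with $k(\beta)$ if and only if $\beta = \tau \circ (\epsilon\bar{\gamma})$ for some $\tau \in \Aut(k/\F_p)$. The key simplification is that $\tau$ fixes $\F_p$ pointwise and $\epsilon$ takes values in $\F_p^{\times}$, so the identity collapses to $\beta = \epsilon \cdot (\tau \circ \bar{\gamma})$.

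The main step is to rule out this identity for each $\beta \in \{1, \bar{\gamma}, \bar{\gamma}^{-1}\}$ by restricting to $G_L$. Since $L \subset K(\bar{\gamma})$, the character $\bar{\gamma}$ is trivial on $G_L$, hence so is $\tau \circ \bar{\gamma}$; moreover each $\beta \in \{1, \bar{\gamma}, \bar{\gamma}^{-1}\}$ is also trivial on $G_L$. The identity therefore collapses to $\epsilon|_{G_L} = 1$, i.e.\ $\#\epsilon(G_L) = 1$, contradicting the hypothesis $\#\epsilon(G_L) > 1$.

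Nontriviality of $\bar{\gamma}$ is part of the standing assumption that $\bar{\rho}|_{G_{F(\zeta_p)}}$ is a sum of two \emph{distinct} characters. If $\epsilon\bar{\gamma}$ were trivial on $G_K$, then $\bar{\gamma}^{-1} = \epsilon$, and restriction to $G_L \subset G_{K(\bar{\gamma})}$ would force $\epsilon|_{G_L} = 1$, again contradicting the hypothesis. I do not anticipate any substantive obstacle; the one subtle point is the observation that $\tau \circ \epsilon = \epsilon$ (because $\tau$ is $\F_p$-linear and $\epsilon$ is $\F_p^{\times}$-valued), which is what allows the entire effect of $\tau$ to be absorbed into the $\bar{\gamma}$ factor and then annihilated by restricting to $G_L$.
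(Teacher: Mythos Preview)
Your strategy—use the preceding lemma to reduce common Jordan--H\"older factors to a character identity $\beta = \tau\circ(\epsilon\bar\gamma) = \epsilon\cdot(\tau\circ\bar\gamma)$, then restrict to kill the $\bar\gamma$-part—is exactly the right idea and is morally the same as the paper's direct homomorphism computation. But the restriction step contains a Galois-theory slip that breaks the argument as written.

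You write: ``Since $L\subset K(\bar\gamma)$, the character $\bar\gamma$ is trivial on $G_L$.'' The inclusion goes the wrong way. From $L\subset K(\bar\gamma)$ one gets $G_{K(\bar\gamma)}\subset G_L$, so $\bar\gamma$ is trivial on the \emph{smaller} group $G_{K(\bar\gamma)}$, not on $G_L$. In fact $\bar\gamma|_{G_L}$ is nontrivial whenever $L\neq K(\bar\gamma)$, which is the generic situation here (indeed $\bar\gamma|_{G_{F(\zeta_p)}}\neq 1$ forces $K(\bar\gamma)\not\subset F(\zeta_p)$, hence $L\subsetneq K(\bar\gamma)$). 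So restricting the identity $\beta = \epsilon\cdot(\tau\circ\bar\gamma)$ to $G_L$ does \emph{not} collapse it to $\epsilon|_{G_L}=1$. The same reversed-inclusion error reappears in your nontriviality paragraph (``$G_L\subset G_{K(\bar\gamma)}$'').

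The fix is immediate: restrict to $G_{K(\bar\gamma)}$ instead of $G_L$. On $G_{K(\bar\gamma)}$ every character in $\{1,\bar\gamma,\bar\gamma^{-1},\tau\circ\bar\gamma\}$ is trivial, so the identity does collapse to $\epsilon|_{G_{K(\bar\gamma)}}=1$. Now observe that the hypothesis $\#\epsilon(G_L)>1$ is \emph{equivalent} to $\epsilon|_{G_{K(\bar\gamma)}}\neq 1$: both say $F(\zeta_p)\not\subset K(\bar\gamma)$ (since $\epsilon(G_L)\cong\Gal(F(\zeta_p)/L)$ and $L=F(\zeta_p)$ iff $F(\zeta_p)\subset K(\bar\gamma)$). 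This gives the contradiction. The paper's proof does exactly this at the level of a single element: it picks $\tau\in G_L$ with $\bar\gamma(\tau)=1$ and $\epsilon(\tau)\neq 1$, which is just an element of $G_{K(\bar\gamma)}$ on which $\epsilon$ is nontrivial. For the nontriviality of $\epsilon\bar\gamma$, the cleanest route is the paper's: restrict to $G_{F(\zeta_p)}$, where $\epsilon$ dies but $\bar\gamma$ does not.
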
 
\begin{proof} The second claim follows from the fact that $\overline{\gamma}|_{G_{F(\zeta_p)}}$ is nontrivial. For the first claim, we show there are no $\F_p[G_K]$-module homomorphisms from $k(\epsilon \overline{\gamma})$ to $k(\overline{\gamma})$ or $k(\overline{\gamma}^{-1})$. Let $f: k(\epsilon \overline{\gamma}) \to k(\overline{\gamma})$ be such a homomorphism, choose $a \in k(\epsilon \overline{\gamma})$, and assume $f(a) = b$ . By the hypothesis of the lemma, there is an element $\tau \in G_L \subset G_K$ such that $\epsilon(\tau) \neq 1$ and $\gamma(\tau) = 1$. Since $f$ is a $\F_p[G_K]$-module homomorphism and $\epsilon(\tau) \in \F_p^{\times}$, we know
$$f(\epsilon(\tau)\overline{\gamma}(\tau)a) = \epsilon(\tau)f(a).$$ On the other hand,
$$f(\epsilon(\tau)\overline{\gamma}(\tau)a) = \overline{\gamma}(\tau)b = b.$$ Thus, $\epsilon(\tau)b = b$, which implies $b = 0$. Since $a \in k(\epsilon \overline{\gamma})$ was arbitrary, this implies $f = 0$. Thus, there are no nontrivial homomorphisms between $k(\epsilon \overline{\gamma})$ and $k(\overline{\gamma})$. \\ \\ The same proof shows there are no nontrivial homomorphisms between $k(\epsilon \overline{\gamma})$ and $k(\overline{\gamma}^{-1})$ or $k$. 
\end{proof}
\begin{lemma} Let $N \geq 1$ and let $K_N = F(\zeta_{p^N},\overline{\rho}_N)$, i.e. $K_N$ is the splitting field of $\overline{\rho}_N|_{F(\zeta_{p^N})}$. Then $H^1(K_N/F,M_1(1)) = 0$. 
\end{lemma}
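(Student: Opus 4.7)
The plan is to apply Shapiro's lemma to pass from the induced module $M_1(1)$ to a single character of $G_K$, and then analyze the resulting $H^1$ via the pro-$p$/prime-to-$p$ decomposition of $\Gal(K_N/K)$, reducing the statement to a Hom-vanishing that is delivered by the preceding lemma.

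First, by the projection formula, $M_1(1) \cong \Ind_{G_K}^{G_F}(\bar{\gamma}\epsilon|_{G_K})$, and both $\bar{\gamma}$ and $\epsilon$ factor through $\Gal(K_N/K)$ because $K_N \supset K(\bar{\rho},\zeta_p)$. Shapiro's lemma applied to the index-$2$ subgroup $\Gal(K_N/K)\subset\Gal(K_N/F)$ then yields
$$H^1(\Gal(K_N/F),\ M_1(1)) \cong H^1(\Gal(K_N/K),\ k(\bar{\gamma}\epsilon)).$$

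Next I would set $L=K(\bar{\rho},\zeta_p)$, $D=\Gal(L/K)$, and $P=\Gal(K_N/L)$. Since $\bar{\rho}|_{G_K}$ is diagonal and $\Gal(F(\zeta_p)/F)\hookrightarrow (\Z/p)^\times$, the group $D$ is abelian of order prime to $p$. On the other hand $P$ is pro-$p$: it is generated by the cyclotomic tower $\Gal(F(\zeta_{p^N})/F(\zeta_p))$ and by the deformation tower $\Gal(F(\rho_N)/F(\bar{\rho}))$, the latter sitting inside the pro-$p$ kernel of $\GL_2(\Sc{O}/p^N)\twoheadrightarrow\GL_2(k)$. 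Because $|D|$ is prime to $p$ and $k(\bar{\gamma}\epsilon)$ is $p$-torsion, the Hochschild--Serre spectral sequence for $1\to P\to\Gal(K_N/K)\to D\to 1$ degenerates; and since $\bar{\gamma}$ and $\epsilon$ both factor through $D$, $P$ acts trivially on the coefficients, whence
$$H^1(\Gal(K_N/K),\ k(\bar{\gamma}\epsilon)) \cong \Hom_{\F_p[D]}\bigl(P^{ab}\otimes_{\Z}\F_p,\ k(\bar{\gamma}\epsilon)\bigr).$$

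The heart of the proof is then to identify the $\F_p[D]$-composition factors of $P^{ab}\otimes\F_p$. For the deformation tower, the filtration of $P$ by the kernels $\ker\bigl(\GL_2(\Sc{O}/p^i)\twoheadrightarrow\GL_2(\Sc{O}/p^{i-1})\bigr)$ has graded pieces on which $D$ acts by the adjoint action through $\bar{\rho}$, giving copies of $\ad\bar{\rho}|_{G_K}=k\oplus k_H\oplus k_E(\bar{\gamma})\oplus k_F(\bar{\gamma}^{-1})$ and thus contributing only the characters $\{\mathbf{1},\bar{\gamma},\bar{\gamma}^{-1}\}$. The cyclotomic tower $\Gal(F(\zeta_{p^N})/F(\zeta_p))$ sits inside the abelian group $\Gal(F(\zeta_{p^N})/F)$, so $D$ acts trivially on it by conjugation, contributing only the character $\mathbf{1}$. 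Hence every composition factor of $P^{ab}\otimes\F_p$ lies in $\{\mathbf{1},\bar{\gamma},\bar{\gamma}^{-1}\}$. By the preceding lemma (applicable since $\bar{\gamma}|_{G_{F(\zeta_p)}}$ is nontrivial implies $\#\epsilon(G_L)>1$ in that lemma's notation), $k(\bar{\gamma}\epsilon)$ shares no $\F_p[G_K]$-composition factor with $k$, $k(\bar{\gamma})$ or $k(\bar{\gamma}^{-1})$. Since $\F_p[D]$-modules are semisimple ($|D|$ prime to $p$), the $\Hom$-space vanishes.

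The main obstacle is the composition-factor analysis: one must carefully work out the $\F_p[D]$-constituents of $P^{ab}\otimes\F_p$ from the presentation $K_N=F(\zeta_{p^N},\rho_N)$. In particular, the subtle point is that, under \emph{conjugation} inside the nonabelian group $\Gal(K_N/K)$, the cyclotomic piece contributes the trivial character rather than $\epsilon$ (the usual ``$\epsilon$-twist'' of $\Gal(F(\zeta_{p^N})/F(\zeta_p))$ arises from Kummer theory, not from conjugation). Once this is pinned down, the reductions via Shapiro, Hochschild--Serre, and the preceding Jordan--H\"older lemma are formal.
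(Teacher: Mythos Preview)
Your overall architecture---Shapiro to pass from $M_1(1)$ to $k(\bar\gamma\epsilon)$, Hochschild--Serre along the prime-to-$p$/$p$-group splitting $1\to P\to \Gal(K_N/K)\to D\to 1$, and then a Jordan--H\"older analysis of $P^{ab}\otimes\F_p$---is exactly the argument the paper is invoking when it cites Thorne's Lemma~5.23; the paper's own proof is simply that reference together with an appeal to the preceding lemma. Your identification of the $D$-constituents of $P^{ab}\otimes\F_p$ as lying in $\{1,\bar\gamma,\bar\gamma^{-1}\}$ (adjoint pieces from the deformation tower, trivial piece from the cyclotomic tower under conjugation) is correct.

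There is, however, one genuine slip. You claim that ``$\bar\gamma|_{G_{F(\zeta_p)}}$ is nontrivial implies $\#\epsilon(G_L)>1$'' where $L=F(\zeta_p)\cap K(\bar\gamma)$. This implication is false: the first condition says $K(\bar\gamma)\not\subset F(\zeta_p)$, while $\#\epsilon(G_L)>1$ says $F(\zeta_p)\not\subset K(\bar\gamma)$, and these are independent. For instance, if $[F(\zeta_p):K]=2$ and $\bar\gamma$ cuts out a cyclic degree-$4$ extension of $K$ containing $F(\zeta_p)$, then $\bar\gamma|_{G_{F(\zeta_p)}}$ has order $2$ yet $L=F(\zeta_p)$ and $\#\epsilon(G_L)=1$. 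The hypothesis $\#\epsilon(G_L)>1$ of the preceding lemma is not automatic from the ambient setup; it is an \emph{additional} restriction (the paper flags this explicitly before stating that lemma) and must be carried through as a standing assumption into the present lemma, exactly as the paper does by invoking ``the preceding lemma'' without further justification. Once you replace your parenthetical justification with that assumption, your proof is complete and matches the intended one.
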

\begin{proof} When $K$ is totally real, this is proved in Lemma 5.23 of \cite{Thorne}. The same proof proves the lemma in the case $K$ is CM using the preceding lemma. 
\end{proof}
The following proposition is the analog of  Proposition 5.20 of \cite{Thorne} and is  the only place where we argue differently from Thorne  because of not having  (in the case that $K$ is not totally real) the luxury to choose places $v$ such that $\rho_N({\rm Frob}_v)$ is the image of complex conjugation under $\rho_N$. The proof relies on the simple observation  that given an element $g$ in $\GL_2(\cO/p^M)$, then for $N>>0$, the element $g^{q^N}$ has (ratio of) eigenvalues that are the Teichm\"uller lift of the (ratio of) the eigenvalues of the reduction of $g$.
\begin{proposition} Let $\Sc{S} = (\bar{\rho},\mu,S,\{\Lambda_v\}_{v \in S},\{\Sc{D}_v\}_{v \in S})$ be a global deformation problem, and let $T = S$. Let $N_0 \geq 1$ be an integer. Let $\rho: G_F \to \GL_2(\Sc{O})$ be a lifting of type $\Sc{S}$. Let $K(\overline{\gamma}\epsilon)$ (resp. $K(\overline{\gamma}\epsilon^{-1})$) be the fixed field of $\ker \bar{\gamma}\epsilon$ (resp. $\ker \bar{\gamma} \epsilon^{-1}$), and assume that $F(\zeta_p) \not \subset K(\overline{\gamma}\epsilon) \cap K(\overline{\gamma}\epsilon^{-1})$. Then for any $m \geq h^1_{\Sc{S},T}(M_1(1))$, there exists a set $Q_0$ of primes, disjoint from $S$, and elements $\alpha_v \in k^{\times}$, satisfying the following:
\begin{itemize}
\item[(1)] $\# Q_0 = m$
\item[(2)] For each $v \in Q_0$, the local deformation problem $\Sc{D}_v^{\St(\alpha_v)}$ is defined. We define the augmented deformation problem
$$\Sc{S}_{Q_0} = (\bar{\rho},\mu,S \cup Q_0,\{\Lambda_v\}_{v \in S} \cup \{\Sc{O}\}_{v \in Q_0}, \{\Sc{D}_v\}_{v \in S} \cup \{\Sc{D}_v^{\St(\alpha_v)}\}_{v \in Q_0}).$$
\item[(3)] Let $\rho_{N_0} = \rho \Mod \lambda^{N_0}: G_F \to \GL_2(\Sc{O}/\lambda^{N_0}\Sc{O})$. Then $\rho_{N_0}(\Frob_v)$ has distinct eigenvalues whose ratio is $q_v$ for each $v \in Q_0$. 
\item[(4)] $H^1_{\Sc{S}_{Q_0},T}(M_1(1)) = 0$. 
\end{itemize}
\end{proposition}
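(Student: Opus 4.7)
I would prove this by induction on $m$: it suffices to show that whenever $H^1_{\Sc{S},T}(M_1(1))\neq 0$, one can find a single prime $v\notin S$ with $v\nmid p$ such that the local deformation problem $\Sc{D}_v^{\St(\alpha_v)}$ is defined, satisfying all the numerical conditions in the statement, and decreasing $h^1_{\Sc{S},T}(M_1(1))$ by one. Fixing a nonzero class $[\psi]\in H^1_{\Sc{S},T}(M_1(1))$, the prime $v$ must satisfy: (i) $v$ splits in $K$ and $q_v\not\equiv 1\bmod p$; (ii) the residual eigenvalue ratio of $\bar\rho(\Frob_v)$ is $\bar q_v$; (iii) the ratio of eigenvalues of $\rho_{N_0}(\Frob_v)$ equals $q_v$ in $\cO/\lambda^{N_0}$; and (iv) the local restriction $\psi|_{F_v}$ is not in $\Sc{L}_v^{\perp}$. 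Iterating $m$ times kills the full dual Selmer group, and (if $m$ exceeds $h^1$) one appends additional primes of the same type via Chebotarev. Conditions (i), (ii), (iv) are handled essentially as in \cite{Thorne}; the novelty is to simultaneously achieve (iii), for which Thorne's device of taking $\Frob_v$ conjugate to complex conjugation fails when $K$ is CM.

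The key new input is the \emph{Teichm\"uller trick} indicated in the preamble to the proposition. Using the hypothesis $F(\zeta_p)\not\subset K(\bar\gamma\epsilon)\cap K(\bar\gamma\epsilon^{-1})$, I would first select $\sigma'_0\in G_K$ with $\bar\epsilon(\sigma'_0)\neq 1$ and $\bar\gamma(\sigma'_0)=\bar\epsilon(\sigma'_0)^{\pm 1}$, so that $\bar\rho(\sigma'_0)$ has eigenvalue ratio $\bar\epsilon(\sigma'_0)\in k^{\times}$. Writing $q=|k|=p^f$, the decomposition $(\cO/\lambda^{N_0})^{\times}=\mu_{q-1}\times U^{(1)}$, with $U^{(1)}=1+\lambda\cO/\lambda^{N_0}$ a finite $p$-group of exponent dividing some $p^M$, combined with $q^N\equiv 1\pmod{q-1}$, implies that $r^{q^N}$ equals the Teichm\"uller lift of $\bar r$ for every $r\in(\cO/\lambda^{N_0})^{\times}$, once $fN\geq M$. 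Applied to the eigenvalue ratio of $\rho_{N_0}(\sigma'_0)$ \emph{and} to $\epsilon_{N_0}(\sigma'_0)$ (both reducing to $\bar\epsilon(\sigma'_0)$), the two become equal to the same Teichm\"uller lift. Setting $\sigma_0:=(\sigma'_0)^{q^N}$, one then has $\sigma_0\in G_K$, $\bar\epsilon(\sigma_0)=\bar\epsilon(\sigma'_0)\neq 1$, and the eigenvalue ratio of $\rho_{N_0}(\sigma_0)$ equals $\epsilon_{N_0}(\sigma_0)$ in $\cO/\lambda^{N_0}$ — precisely the global Galois-theoretic analogue of (i)--(iii).

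To simultaneously secure (iv), I would invoke Lemma 2.10: since $H^1(K_{N_0}/F,M_1(1))=0$, inflation-restriction shows that $\psi|_{G_{K_{N_0}}}\colon G_{K_{N_0}}\to M_1(1)$ is a nonzero $G_F$-equivariant homomorphism. I can then choose $\sigma_1\in G_{K_{N_0}}$ so that $\psi(\sigma_0\sigma_1)=\psi(\sigma_0)+\sigma_0\cdot\psi(\sigma_1)$ has nontrivial image in $H^1(F_v,M_1(1))/\Sc{L}_v^{\perp}$. Crucially, $\sigma_1\in G_{K_{N_0}}$ acts trivially on both $\rho_{N_0}$ and on $\mu_{p^{N_0}}$, so the image of $\sigma_0\sigma_1$ in $\Gal(K_{N_0}/F)$ agrees with that of $\sigma_0$ and conditions (i)--(iii) are untouched. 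Applying Chebotarev to a large enough Galois extension $L/F$ containing $K_{N_0}$ and the fixed field of $\ker\psi$ produces a prime $v$ with $\Frob_v$ in the conjugacy class of $\sigma_0\sigma_1$, completing the inductive step.

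The main obstacle is verifying that $\sigma_1$ may be chosen so that $\psi(\sigma_0\sigma_1)$ lies outside the annihilator $\Sc{L}_v^{\perp}$. This reduces to showing that the image of $\psi|_{G_{K_{N_0}}}$, viewed as a $G_F$-submodule of $M_1(1)$, is not contained in the relevant local subspace determined by the decomposition $M_1(1)|_{G_{F_v}}=k(\epsilon^2)\oplus k$ arising from $v$ being split in $K$ with $q_v\not\equiv\pm 1\pmod p$. The requisite Jordan--H\"older independence for this step is precisely what is furnished by Lemmas 2.8 and 2.9, together with the hypothesis on $F(\zeta_p)$ — exactly the same mechanism by which Thorne handles the Selmer-killing step, now with his choice of Frobenius replaced by our Teichm\"uller-modified $\sigma_0$.
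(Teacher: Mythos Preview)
Your proposal is correct and takes essentially the same approach as the paper: induction to a single prime, the Teichm\"uller trick $\sigma_0=(\sigma_0')^{q^N}$ to force the eigenvalue ratio of $\rho_{N_0}(\sigma_0)$ to equal $\epsilon_{N_0}(\sigma_0)$, adjustment by an element of $G_{K_{N_0}}$ via Lemma~2.10 and inflation--restriction, then Chebotarev. Your final paragraph slightly over-complicates the Selmer-killing step by invoking Lemmas~2.8--2.9; the paper simply demands $\varphi(\sigma)\neq 0$ in $M_1(1)$, which suffices since $H^1(F_v,M_1(1))\cong k$ with $\Sc{L}_v^{\perp}\cap H^1(F_v,M_1(1))=0$, and the image of $\varphi|_{G_{K_{N_0}}}$ (a nonzero $G_F$-submodule of $M_1(1)$) cannot lie in a single $k_E$- or $k_F$-line because $w$ swaps them.
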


\begin{proof} We wish to find a set $Q_0$ of primes such that $h^1_{\Sc{S}_{Q_0},T}(M_1(1)) = 0$. Suppose $r = h^1_{\Sc{S},T}(M_1(1)) \geq 0$. Using induction, it suffices to find a single prime $v$ satisfying the conditions of the theorem such that $h^1_{\Sc{S}_{\{v\}},T}(M_1(1)) = \max(r-1,0)$. The case $r=0$ is easy, so assume $r \geq 1$. \\  

Let $0 \neq [\varphi] \in H^1_{\Sc{S},T}(M_1(1))$ be a nonzero class. We wish to find a place $v \notin S$ such that:
\begin{itemize}
\item[(i)] $v$ splits in $K$
\item[(ii)] $\rho_{N_0}(\Frob_v)$ has distinct eigenvalues with ratio $q_v \Mod \lambda^{N_0}$. 
\item[(iii)] $q_v \not  \equiv \pm 1 \Mod \lambda^{N_0}$
\item[(iv)] $\varphi(\Frob_v) \neq 0$ ($\in M_1(1)$). 
\end{itemize}
Indeed, the first three conditions imply that $\Sc{D}_v^{\St(\alpha_v)}$ is defined for the appropriate choice of $\alpha_v$. We also have an exact sequence
$$0 \to H^1_{\Sc{S}_{\{v\}},T}(M_1(1)) \to H^1_{\Sc{S},T}(M_1(1)) \to H^1(G_{F_v},M_1(1)) \cong k.$$  Condition (iv) implies the final map is surjective, which gives $h^1_{\Sc{S}_v,T}(M_1(1)) < h^1_{\Sc{S},T}(M_1(1))$, as desired. \\ \\ By the Cebotarev density theorem, it suffices to find an element $\sigma \in G_K$ such that:
\begin{itemize}
\item[(a)] $\rho_{N_0}(\sigma)$ has distinct eigenvalues with ratio $\epsilon(\sigma) \Mod \lambda^{N_0}$. 
\item[(b)] $\epsilon(\sigma) \not \equiv \pm 1 \Mod \lambda^{N_0}$
\item[(c)] $\varphi(\sigma) \neq 0$. 
\end{itemize}
If $N_0 = 1$, then the assumption in the theorem ensures we can find $\sigma_1$ in $G_K$ such that $\overline{\gamma}(\sigma_1) = \epsilon(\sigma_1) \Mod \lambda$. Indeed, the assumption $F(\zeta_p) \not \subset K(\overline{\gamma}\epsilon) \cap K(\overline{\gamma}\epsilon^{-1})$ ensures that either $G_{K(\overline{\gamma}\epsilon)}$ or $G_{K(\overline{\gamma}\epsilon^{-1})}$ is not contained in $G_{F(\zeta_p)}$. This means there exists $\sigma_1$ in either  $G_{K(\overline{\gamma}\epsilon)}$ or $G_{K(\overline{\gamma}\epsilon^{-1})}$ such that $\epsilon(\sigma_1) \not \equiv 1 \Mod p$. In the latter case, we find our desired $\sigma_1$. In the former case, by exchanging the roles of the eigenvalues, we get our desired $\sigma_1$.\\ \\ If $\varphi(\sigma_1) \neq 0$, then take $\sigma = \sigma_1$, so suppose $\varphi(\sigma_1) = 0$. We have the inflation-restriction sequence:
$$0 \to H^1(K_1/F, M_1(1)^{G_{K_1}}) \to H^1(F,M_1(1)) \to H^1(K_1,M_1(1))^{\Gal(K_1/F)}.$$ By the previous lemma, the first group is zero, so the image of $\varphi$ in $H^1(K_1,M_1(1))$ is nonzero. This restriction is a nonzero homomorphism $\varphi|_{G_{K_1}}: G_{K_1} \to M_1(1)$. Thus, we can find $\tau \in G_{K_1}$ such that $\varphi(\tau) \neq 0$. Then take $\sigma = \tau \sigma_1$. Then $$\overline{\rho}(\sigma) = \overline{\rho}(\tau) \overline{\rho}(\sigma_1) = \overline{\rho}(\sigma_1)$$ as $\tau \in \ker(\overline{\rho})$. We also find 
$$\epsilon(\sigma) = \epsilon(\tau) \epsilon(\sigma_1)  \equiv \epsilon(\sigma_1) \Mod \lambda$$ as $\epsilon(\tau) \equiv 1 \Mod \lambda$. Thus, $\overline{\gamma}(\sigma) \equiv \epsilon(\sigma) \Mod \lambda$. Moreover, 
$$\varphi(\sigma) = \varphi(\tau) + \varphi(\sigma_1),$$ meaning $\varphi(\sigma) \neq 0$, as required. \\ \\ If $N_0 > 1$, then consider the element $\sigma_1$ defined above. Then $\rho_{N_0}(\sigma_1)$ has distinct eigenvalues by Hensel's lemma, and we know this ratio modulo $\lambda$ is $\overline{\gamma}(\sigma_1) \equiv \epsilon(\sigma_1) \Mod \lambda$. Consider $\sigma_{N_0} = \sigma_1^{q^M}$ for some $M$ to be determined and $q = \# k$. For some sufficiently high power of $M$, $\epsilon(\sigma_{N_0}) \Mod \lambda^{N_0}$ will be the Teichmuller lifting of $\epsilon(\sigma_1) \Mod \lambda$ to the $\Mod \lambda^{N_0}$ ring (indeed, $M = q^{N_0-1}$ should do). But since $\overline{\gamma}(\sigma_1) \equiv \epsilon(\sigma_1) \Mod \lambda$, we deduce that the ratio of the eigenvalues of $\rho_{N_0}(\sigma_{N_0})$ will be equivalent to $\epsilon(\sigma_{N_0}) \Mod \lambda^{N_0}$. \\ \\ We still need to make sure $\varphi(\sigma) \neq 0$. If $\varphi(\sigma_{N_0}) \neq 0$, then we can take $\sigma = \sigma_{N_0}$. If $\varphi(\sigma_{N_0}) = 0$, consider $\tau \in G_{K_N}$ with $\varphi(\tau) \neq 0$ as before. Let $\sigma = \tau \sigma_{N_0}$. By the same reasoning as in the $N_0=1$ case, the ratio of the eigenvalue of $\rho_N(\sigma)$ will still be equivalent to $\epsilon(\sigma) \Mod \lambda^{N_0}$, and moreover $\varphi(\sigma) = \varphi(\tau) + \varphi(\sigma_{N_0}) \neq 0$ by construction. This concludes the proof.
\end{proof}

\begin{remark} Note that the assumption   $F(\zeta_p) \not \subset K(\overline{\gamma}\epsilon) \cap K(\overline{\gamma}\epsilon^{-1})$ is implied by the more checkable condition that $(\# \epsilon(G_L ), \# \overline{\gamma}(G_L)) > 1$, where $L = F(\zeta_p) \cap K(\overline{\gamma})$. 
\end{remark}

\subsection{Killing the $M_0(1)$ portion}
Having killed the $M_1(1)$ portion of dual Selmer, we can try and get auxiliary primes that take care of the remaining part of the group. Indeed, we have the following proposition:
\begin{proposition} Let $\Sc{S} = (\bar{\rho},\mu,S,\{\Lambda_v\}_{v \in S}, \{\Sc{D}_v\}_{v \in S})$ be a global deformation problem. Let $T \subset S$, and suppose for $v \in S-T$ we have $\Sc{D}_v = \Sc{D}_v^{\St(\alpha_v)}$. Suppose further that $h^1_{\Sc{S},T}(M_1(1)) = 0$, and let $N_1 \geq 1$ be an integer. Then there exists a finite set $Q_1$ of finite places of $F$, disjoint from $S$, satisfying:
\begin{itemize}
\item[(1)] We have $\# Q_1 = h^1_{\Sc{S},T}(M_0(1))$, and for each $v \in Q_1$, the norm $q_v \equiv 1 \Mod p^{N_1}$ and $\bar{\rho}(\Frob_v)$ has distinct eigenvalues. 
\item[(2)] Define the augmented deformation problem 
$$\Sc{S}_{Q_1} = (\bar{\rho},\mu,S \cup Q_1,\{\Lambda_v\}_{v \in S} \cup \{\Sc{O}\}_{v \in Q_1},\{\Sc{D}_v\}_{v \in S} \cup \{\Sc{D}_v^{\Box}\}_{v \in Q_1}).$$ Then $h^1_{\Sc{S}_{Q_1},T}(\ad^0 \bar{\rho}(1)) = 0$. 
\end{itemize}
\end{proposition}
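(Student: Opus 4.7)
My plan is to run the standard Taylor-Wiles argument, which is now accessible because Proposition 2.11 has already killed the $M_1(1)$ portion of dual Selmer. I would induct on $r = h^1_{\Sc{S},T}(M_0(1))$: for the inductive step, given a fixed nonzero class $[\psi] \in H^1_{\Sc{S},T}(M_0(1))$, I would produce a single prime $v$ satisfying the conditions in (1) such that enlarging the deformation problem to $\Sc{S}_{\{v\}}$ via the unrestricted local problem $\Sc{D}_v^{\Box}$ (which imposes $\Sc{L}_v^{\perp} = 0$ on dual Selmer) strictly decreases $h^1_{\Sc{S},T}(M_0(1))$ by killing $[\psi]$. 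Via the usual Greenberg-Wiles exact sequence
\[
0 \to H^1_{\Sc{S}_{\{v\}},T}(M_0(1)) \to H^1_{\Sc{S},T}(M_0(1)) \to H^1(G_{F_v}, M_0(1)),
\]
surjectivity of the final arrow on $[\psi]$ is the only extra requirement. The condition $h^1_{\Sc{S},T}(M_1(1)) = 0$ is automatically preserved, since adjoining local conditions can only shrink dual Selmer.

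To produce $v$, I would use Chebotarev to find $\sigma \in G_F$ such that (a) $\sigma$ fixes $F(\zeta_{p^{N_1}})$, which forces $q_v \equiv 1 \Mod{p^{N_1}}$ and, since $K \subset F(\zeta_p)$, also forces $\sigma \in G_K$; (b) $\bar{\gamma}(\sigma) \neq 1$, so that $\bar{\rho}(\sigma)$ is diagonal with the distinct eigenvalues $\bar{\chi}(\sigma)$ and $\bar{\chi}^w(\sigma)$; and (c) $\psi(\sigma) \neq 0$ in $M_0(1)$. Existence of an element $\sigma_1$ satisfying (a), (b) follows from the facts that $\bar{\gamma}|_{G_{F(\zeta_p)}}$ is nontrivial of order prime to $p$ while $\Gal(F(\zeta_{p^{N_1}})/F(\zeta_p))$ is a pro-$p$ group, so $\bar{\gamma}$ remains nontrivial on $G_{F(\zeta_{p^{N_1}})}$.

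To also arrange (c), I would follow the inflation-restriction strategy of Proposition 2.11 with $K_{N_1} = F(\zeta_{p^{N_1}}, \bar{\rho})$: if $\psi(\sigma_1) \neq 0$, take $\sigma = \sigma_1$; otherwise use the $M_0(1)$-analog of Lemma 2.10, namely $H^1(K_{N_1}/F, M_0(1)) = 0$, to conclude that $\psi|_{G_{K_{N_1}}}$ is nonzero, choose $\tau \in G_{K_{N_1}}$ with $\psi(\tau) \neq 0$, and set $\sigma = \tau \sigma_1$. Because $\tau$ acts trivially on both $M_0(1)$ and $\bar{\rho}$, conditions (a) and (b) are preserved and $\psi(\sigma) = \psi(\tau) + \psi(\sigma_1) \neq 0$, yielding the desired prime $v$ via Chebotarev.

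The main obstacle is establishing the vanishing $H^1(K_{N_1}/F, M_0(1)) = 0$. Since $M_0(1) = k(\delta_{K/F}\epsilon)$ is acted upon through the cyclic quotient $\Gal(F(\zeta_p)/F)$ of order prime to $p$, its cohomology on this quotient vanishes by an elementary cyclic-group calculation (either the character $\delta_{K/F}\epsilon$ is trivial and $H^1$ vanishes by Maschke, or it is nontrivial and $H^1$ of a cyclic group with nontrivial character coefficients vanishes). The remaining piece $\Gal(K_{N_1}/F(\zeta_p))$ acts trivially on $M_0(1)$, and by inflation-restriction one reduces to verifying that the character $\delta_{K/F}\epsilon$ does not occur as a $\Gal(F(\zeta_p)/F)$-subquotient of $\Hom(\Gal(K_{N_1}/F(\zeta_p))^{\mathrm{ab}}, k)$; this follows from a Jordan-Holder analysis in the spirit of Lemma 2.9 using the concrete structure of $K_{N_1}/F(\zeta_p)$ as an extension cut out by $\bar{\rho}$ and the $p$-power cyclotomic tower.
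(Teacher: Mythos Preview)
Your proposal is correct and follows exactly the approach the paper invokes: the paper's own proof is simply a reference to Proposition~5.24 of \cite{Thorne}, and you have written out that standard Taylor--Wiles Chebotarev argument in detail.

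One minor sharpening: the vanishing $H^1(K_{N_1}/F, M_0(1)) = 0$ is not really a Jordan--H\"older computation in the spirit of Lemma~2.9. Any additive homomorphism $\Gal(K_{N_1}/F(\zeta_p)) \to k$ kills the prime-to-$p$ part cut out by $\bar\rho$ and so factors through the cyclotomic piece $\Gal(F(\zeta_{p^{N_1}})/F(\zeta_p))$, on which $\Gal(F(\zeta_p)/F)$ acts trivially (the full cyclotomic tower being abelian). Hence the only input needed is that $\delta_{K/F}\epsilon$ is a \emph{nontrivial} character of $\Gal(F(\zeta_p)/F)$, i.e.\ that $[F(\zeta_p):F] > 2$; this holds in the paper's setting since the hypothesis $F(\zeta_p) \not\subset K(\bar\gamma\epsilon)\cap K(\bar\gamma\epsilon^{-1})$ of Proposition~2.11 visibly fails when $K = F(\zeta_p)$.
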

\begin{proof} See the proof of Proposition 5.24 in \cite{Thorne}. 
\end{proof}

\section{The Main Theorem}
From this point, everything from sections 6 of \cite{Thorne} carries over. We can now work towards proving the main theorem. We proceed as in the aforementioned paper, making the necessary modifications. First, some preliminary results.

\begin{lemma}\label{BaseChange}  Let $F$ be a totally real number field, and let $F'/F$ be a totally real, soluble extension. Let $p$ be a prime and let $\iota: \overline{\Q}_p \stackrel{\sim}{\to} \C$ be a fixed isomorphism.
\begin{itemize}
\item[(1)] Let $\pi$ be a cuspidal automorphic representaion of $\GL_2(\A_F)$ of weight $2$, and suppose that $r_{\iota}(\pi)|_{G_{F'}}$ is irreducible. Then there exists a cuspidal automorphic representation $\pi_{F'}$ of $\GL_2(\A_{F'})$ of weight $2$, called the \emph{base change} of $\pi$, such that $r_{\iota}(\pi_{F'}) \cong r_{\iota}(\pi)|_{G_{F'}}$. 
\item[(2)] Let $\rho: G_F \to \GL_2(\overline{\Q}_p)$ be a continuous representation such that $\rho|_{G_{F'}}$ is irreducible. Let $\pi'$ be a cuspidal automorphic representation of $\GL_2(\A_{F'})$ of weight $2$ with $\rho|_{G_{F'}} \cong r_{\iota}(\pi')$. Then there exists a cuspidal automorphic representation $\pi$ of $\GL_2(\A_F)$ of weight $2$ such that $\rho \cong r_{\iota}(\pi)$. 
\end{itemize}
\end{lemma}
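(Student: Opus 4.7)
The plan is to reduce both parts to the case where $F'/F$ is cyclic of prime degree, using the solvability assumption to pick a tower $F = F_0 \subset F_1 \subset \cdots \subset F_r = F'$ in which each step is cyclic of prime degree and each intermediate field is totally real (this is possible because one can work inside the Galois closure of $F'/F$ over $F$, which is again totally real and soluble). The two parts are then applied inductively along this tower. All of what follows is essentially a consequence of the cyclic base change theorem of Langlands and Arthur--Clozel for $\GL_2$; the only paper-specific content is the verification that the irreducibility hypothesis on the Galois representation translates correctly into the cuspidality (resp.\ descent) condition.

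For part (1), with $F'/F$ cyclic of prime degree $\ell$, cyclic base change produces an automorphic representation $\pi_{F'}$ of $\GL_2(\A_{F'})$ whose local parameters at almost all places are given by the standard recipe. Matching Frobenius eigenvalues via the Satake isomorphism forces $r_{\iota}(\pi_{F'}) \cong r_{\iota}(\pi)|_{G_{F'}}$. The base change is cuspidal precisely when $\pi$ is not an automorphic induction from a proper intermediate field of $F'/F$, and by compatibility of automorphic induction with the Langlands correspondence this is equivalent to $r_{\iota}(\pi)|_{G_{F'}}$ being irreducible, which is our hypothesis. Preservation of weight $2$ follows from the archimedean compatibility of base change: every archimedean place of $F'$ is real (as $F'/F$ is totally real), and a weight-$2$ discrete series for $\GL_2(\bbR)$ base-changes to a weight-$2$ discrete series at each real place above.

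For part (2), again assume $F'/F$ is cyclic of prime degree $\ell$ and let $\sigma$ generate $\Gal(F'/F)$. Since $\rho$ is defined on $G_F$, we have $r_{\iota}(\pi')^{\sigma} \cong (\rho|_{G_{F'}})^{\sigma} \cong \rho|_{G_{F'}} \cong r_{\iota}(\pi')$, and strong multiplicity one for $\GL_2$ therefore forces $(\pi')^{\sigma} \cong \pi'$. The descent half of cyclic base change then yields a cuspidal representation $\pi$ of $\GL_2(\A_F)$ whose base change is $\pi'$, uniquely determined up to twist by a character of $\Gal(F'/F)$ (viewed as a Hecke character via class field theory). By part (1), $r_{\iota}(\pi)|_{G_{F'}} \cong r_{\iota}(\pi')$, so $\rho$ and $r_{\iota}(\pi)$ agree after restriction to $G_{F'}$ and therefore differ by some character $\chi$ of $\Gal(F'/F)$; replacing $\pi$ by $\pi \otimes (\chi^{-1} \circ \det)$ gives the desired identity $\rho \cong r_{\iota}(\pi)$, and the weight remains $2$ by the same archimedean reasoning.

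The main obstacle, to the extent there is one, is verifying that cuspidality is preserved under base change in part (1) and that the descent in part (2) can be chosen so the associated Galois representation equals $\rho$ (not just $\rho$ up to a twist by a character of $\Gal(F'/F)$). Both points are standard consequences of Arthur--Clozel, so the proof ultimately amounts to citing those results and unwinding the compatibility with the Langlands correspondence.
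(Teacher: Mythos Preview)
Your proposal is correct and is exactly the standard argument the paper invokes by citation: the paper gives no independent proof, referring instead to Lemma~5.1 of \cite{Thorne}, which in turn rests on Langlands's cyclic base change for $\GL_2$ and the descent argument of \cite{BLGHT}, Lemma~1.3. Your reduction to a tower of cyclic prime-degree totally real steps, the cuspidality criterion via irreducibility of the restricted Galois representation, and the twist correction in the descent step are precisely the ingredients those references supply.
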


\begin{proof} This is stated in \cite{Thorne}, Lemma 5.1. The proof follows from results of \cite{Lan}, using arguments of \cite{BLGHT}, Lemma 1.3. 
\end{proof}

\begin{theorem}\label{Long} Let $F$ be a totally real field, and let $p$ be an odd prime. Let $\rho: G_F \to \GL_2(\overline{\Q}_p)$ be a continuous representation. Suppose that:
\begin{itemize}
\item[(1)] $[F:\Q]$ is even.
\item[(2)] Letting $K$ be the quadratic subfield of $F(\zeta_p)/F$, there exists a continuous character $\bar{\chi}: G_K \to \overline{\F}_p^{\times}$ such that $\bar{\rho} \cong \Ind_{G_K}^{G_F} \overline{\chi}$.
\item[(3)] Letting $w \in \Gal(K/F)$ be the nontrivial element, the character $\bar{\gamma} = \bar{\chi}/\bar{\chi}^w$ remains nontrivial even after restriction to $G_{F(\zeta_p)}$ (in particular, $\bar{\rho}$ is irreducible). 
\item[(4)] We have $F(\zeta_p) \not \subset K(\bar{\gamma}\epsilon^{-1}) \cap K(\bar{\gamma}\epsilon)$.
\item[(5)] The character $\psi = \epsilon \det \rho$ is everywhere unramified. 
\item[(6)] The representation $\rho$ is almost everywhere unramified. 
\item[(7)] For each place $v|p$, $\rho|_{G_{F_v}}$ is semi-stable, and $\bar{\rho}|_{G_{F_v}}$ is trivial. For each embedding $\tau: F \hookrightarrow \overline{\Q}_p$, we have $\text{HT}_{\tau}(\rho) = \{0,1\}$. 
\item[(8)] If $v \nmid p$ is a finite place of $F$ at which $\rho$ is ramified, then $q_v \equiv 1\Mod p$, $\text{WD}(\rho|_{G_{F_v}})^{F-ss} \cong \text{rec}_{F_v}^T(\text{St}_2(\chi_v))$ for some unramified character $\chi_v: F_v^{\times} \to \overline{\Q}_p^{\times}$, and $\bar{\rho}|_{G_{F_v}}$ is trivial. The number of such places is even. 
\item[(9)] There exists a cuspidal automorphic representation $\pi$ of $\GL_2(\A_F)$ of weight $2$ and an isomorphism $\iota: \overline{\Q}_p \to \C$ satisfying:
\begin{itemize}
\item[(a)] There is an isomorphism $\overline{r_{\iota}(\pi)} \cong \overline{\rho}$. 
\item[(b)] If $v|p$ and $\rho$ is ordinary, then $\pi_v$ is $\iota$-ordinary and $\pi_v^{U_0(v)} \neq 0$. If $v|p$ and $\rho$ is non-ordinary, then $\pi_v$ is not $\iota$-ordinary and $\pi_v$ is unramified. 
\item[(c)] If $v \nmid p\infty$ and $\rho|_{G_{F_v}}$ is unramified, then $\pi_v$ is unramified. If $v \nmid p\infty$ and $\rho|_{G_{F_v}}$ is ramified, then $\pi_v$ is an unramified twist of the Steinberg representation. 
\end{itemize}
\end{itemize}
Then $\rho$ is automorphic: there exists a cuspidal automorphic representation $\pi'$ of $\GL_2(\A_F)$ of weight $2$ and an isomorphism $\rho \cong r_{\iota}(\pi')$. 
\end{theorem}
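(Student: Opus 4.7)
The plan is to apply the synthesis of Taylor-Wiles patching and Khare's $p$-adic approximation method developed in Section 6 of \cite{Thorne}, substituting Proposition 2.11 above for Thorne's Proposition 5.20 at the one step that required $K$ to be totally real. First, I would set up the global deformation problem $\mathcal{S} = (\bar\rho,\mu,S,\{\Lambda_v\},\{\mathcal{D}_v\})$ with $\mu = \epsilon^{-1}\psi$ and $S$ the union of the $p$-adic places and the places at which $\rho$ ramifies; at each $v \mid p$ take the ordinary or Fontaine-Laffaille deformation condition dictated by $\rho|_{G_{F_v}}$, and at each ramified $v \nmid p$ take the Steinberg-type condition. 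Hypotheses (5)--(9) together ensure that both $\rho$ and the Galois representation $r_\iota(\pi)$ attached to the residually automorphic $\pi$ are liftings of type $\mathcal{S}$.

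For each integer $N \geq 1$, I would apply Proposition 2.11 with $N_0 = N$ to the reduction $\rho_N = \rho \Mod \lambda^N$, producing a set $Q_0^{(N)}$ of Ramakrishna-type primes split in $K$ such that $\rho_N$ remains a lifting of the augmented problem $\mathcal{S}_{Q_0^{(N)}}$ and $H^1_{\mathcal{S}_{Q_0^{(N)}},S}(M_1(1)) = 0$; hypothesis (4) is precisely what makes the proposition applicable. Next I would apply Proposition 2.12 with $N_1 = N$ to choose Taylor-Wiles primes $Q_1^{(N)}$ killing the remaining $M_0(1)$-portion of dual Selmer, achieving $H^1_{\mathcal{S}_{Q_0^{(N)}\cup Q_1^{(N)}},S}(\ad^0\bar\rho(1)) = 0$.

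With dual Selmer annihilated, the standard Taylor-Wiles-Kisin patching argument applied to Hecke modules at the augmented levels yields an $R = \mathbb{T}$ theorem for the deformation problem $\mathcal{S}_{Q_0^{(N)}}$, which forces $\rho_N$ to arise from an automorphic form of the corresponding level. Khare's $p$-adic approximation principle then assembles the resulting sequence of automorphic lifts of $\bar\rho$, by a compactness argument in the complete Noetherian local ring $R^{\mathrm{univ}}_{\mathcal{S}}$, into an automorphic lift isomorphic to $\rho$ itself; soluble base change (Lemma \ref{BaseChange}) is invoked as needed, in the spirit of \cite{BLGHT}. The main obstacle is contained entirely in Proposition 2.11: Thorne's original Ramakrishna argument relied on auxiliary Frobenii conjugate to complex conjugation, which forces $K$ totally real, and the Teichmüller-lifting trick of Proposition 2.11 removes this restriction. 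Once it is in place, the rest of the proof is a direct transcription of Section 6 of \cite{Thorne}.
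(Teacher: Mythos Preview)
Your proposal is correct and follows essentially the same approach as the paper: the paper's proof simply states that one repeats the proof of Theorem 7.2 of \cite{Thorne}, replacing Proposition 5.20 of loc.\ cit.\ with Proposition 2.11 above, which is exactly the strategy you outline in more detail. Your expanded sketch of the patching plus $p$-adic approximation machinery (Propositions 2.11 and 2.12 feeding into the Taylor--Wiles--Kisin argument, then level raising/lowering to conclude) is an accurate unpacking of what that citation entails; the one caveat is that soluble base change is not invoked in this theorem itself but rather in the subsequent deduction of Theorem~\ref{Aut}.
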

\begin{remark} Here, $U_0(v)$ is the set of matrices in $\GL_2(\Sc{O}_{F_v})$ whose reduction modulo a fixed uniformizer of $\Sc{O}_{F_v}$ is upper triangular. 
\end{remark}
\begin{proof} This is Theorem 7.2 \cite{Thorne} with the necessary modifications (namely, the addition of condition (4) instead of the condition that $K$ be totally real). One can just repeat the proof the author gives in that paper, replacing Proposition 5.20 of loc. cit. with Proposition 2.11 from the previous section. 
\end{proof}

Using this theorem, we arrive at the main theorem. 

\begin{theorem}\label{Aut} Let $F$ be a totally real number field, let $p$ be an odd prime, and let $\rho: G_F \to \GL_2(\overline{\Q}_p)$ be a continuous representation satisfying the following:
\begin{itemize}
\item[(1)] The representation $\rho$ is almost everywhere unramified. 
\item[(2)] For each $v|p$ of $F$, the local representation $\rho|_{G_{F_v}}$ is de Rham. For each embedding $\tau: F \hookrightarrow \overline{\Q}_p$, we have $\text{HT}_{\tau}(\rho) = \{0,1\}$. 
\item[(3)] For each complex conjugation $c \in G_F$, we have $\det \rho(c) = -1$. 
\item[(4)] The residual representation $\bar{\rho}$ is absolutely irreducible, but $\bar{\rho}|_{G_{F(\zeta_p)}}$ is a direct sum of two distinct characters. Further suppose that if $K$ is the unique quadratic subfield of $F(\zeta_p)/F$ and $\bar{\gamma}: G_K \to k^{\times}$ is the ratio of the two characters, then we have $F(\zeta_p) \not \subset K(\bar{\gamma}\epsilon^{-1}) \cap K(\bar{\gamma}\epsilon)$. 
\end{itemize}
Then $\rho$ is automorphic: there exists a cuspidal automorphic representation $\pi$ of $\GL_2(\A_F)$ of weight $2$, an isomorphism $\iota: \overline{\Q}_p \to \C$, and an isomorphism $\rho \cong r_{\iota}(\pi)$. 
\end{theorem}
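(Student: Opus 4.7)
The plan is to deduce Theorem~\ref{Aut} from Theorem~\ref{Long} by a sequence of solvable base changes and twists that normalize the local structure of $\rho$ without destroying the residual hypotheses. The key point is that Theorem~\ref{Long} is the serious content (incorporating Proposition 2.11 to kill the $M_1(1)$-part of dual Selmer via the Teichm\"uller trick), so the proof of Theorem~\ref{Aut} should consist entirely of manipulations that bring the hypotheses into the form required by Theorem~\ref{Long} while preserving condition $(4)$.

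First I would establish the residually automorphic hypothesis $(9)$ of Theorem~\ref{Long}. Since $\bar\rho$ is irreducible but reducible on restriction to $G_{F(\zeta_p)}$, Clifford theory gives $\bar\rho \cong \Ind_{G_K}^{G_F}\bar\chi$ for a character $\bar\chi:G_K\to k^\times$ with $\bar\gamma = \bar\chi/\bar\chi^w$ nontrivial. I would lift $\bar\chi$ to an algebraic Hecke character of $K$ of infinity type matching Hodge--Tate weights $\{0,1\}$ (using that $K$ is either totally real or CM relative to $F$), and form the automorphic induction $\pi_0$ to $\GL_2(\A_F)$; the nontriviality of $\bar\gamma$ ensures $\pi_0$ is cuspidal, and $\overline{r_\iota(\pi_0)} \cong \bar\rho$. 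This is a standard construction and will serve as the ``seed'' residually automorphic representation.

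Next I would choose a solvable totally real extension $F'/F$, linearly disjoint from $K(\bar\gamma\epsilon)K(\bar\gamma\epsilon^{-1})\cdot\overline{F}^{\ker\bar\rho}$ over $F$, such that: $[F':\Q]$ is even; $\bar\rho|_{G_{F'_v}}$ is trivial for every $v\mid p$ of $F'$; at every ramified place $v\nmid p$ of $\rho|_{G_{F'}}$ the local representation is an unramified twist of Steinberg with $q_v\equiv 1\Mod p$ (invoking Grothendieck's monodromy theorem and enlarging $F'$ so that inertia acts unipotently with trivial residual action); the number of such Steinberg places is even (level-raising in one auxiliary place if necessary); and, after twisting $\rho$ by a suitable finite-order character of $G_F$, the determinant character $\psi = \epsilon\det\rho|_{G_{F'}}$ is everywhere unramified. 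The linear disjointness is exactly what keeps condition $(4)$ stable under passage to $F'$ and what keeps $\bar\rho|_{G_{F'}}$ absolutely irreducible. Base-changing $\pi_0$ to $F'$ via Lemma~\ref{BaseChange}(1) furnishes the residually automorphic match required by hypothesis $(9)$ of Theorem~\ref{Long}, with ordinary/non-ordinary behavior at each $v\mid p$ arranged by a suitable congruence (Hida families in the ordinary case, local--global compatibility in the non-ordinary case).

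At this point all hypotheses of Theorem~\ref{Long} hold for the (twisted) $\rho|_{G_{F'}}$, so Theorem~\ref{Long} gives automorphy over $F'$. Finally, since $\bar\rho|_{G_{F'}}$ is irreducible, Lemma~\ref{BaseChange}(2) allows descent back to $F$, and untwisting by the auxiliary character yields automorphy of the original $\rho$. The main obstacle is Step~2: one must simultaneously achieve the local Steinberg and triviality conditions $(5)$, $(7)$, $(8)$ over $F'$ while \emph{preserving} the subtle global condition $(4)$, which is itself a statement about how $F(\zeta_p)$ sits inside $K(\bar\gamma\epsilon^{\pm1})$. Getting the linear disjointness right, and checking that level-raising to achieve an even number of Steinberg places does not violate $(4)$, is the delicate part; everything else is a formal repackaging of Thorne's argument with Proposition~2.11 substituted for his Proposition~5.20.
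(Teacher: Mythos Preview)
Your proposal is correct and follows essentially the same route as the paper: the paper's proof is simply ``repeat Thorne's proof of his Theorem~7.4, replacing his Theorem~7.2 by Theorem~\ref{Long},'' which amounts to exactly the reduction you describe---obtain residual automorphy by automorphic induction from a character lift of $\bar\chi$ (this is the content of the corollary of Thorne that the paper notes ``made no assumptions on the quadratic subfield $K$''), pass to a solvable totally real $F'/F$ chosen linearly disjoint from the relevant splitting fields so that conditions (1)--(8) of Theorem~\ref{Long} hold while (4) and absolute irreducibility persist, apply Theorem~\ref{Long} over $F'$, and descend via Lemma~\ref{BaseChange}(2). One minor remark: your worry that level-raising might ``violate (4)'' is misplaced, since (4) is a condition on $\bar\rho$ and the field $F$, not on the auxiliary automorphic representation; what matters is only that the base change $F'/F$ is linearly disjoint from $K(\bar\gamma\epsilon^{\pm1})\cdot\overline{F}^{\ker\bar\rho}$, which you already arrange.
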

\begin{proof} The proof is exactly the same as Theorem 7.4 of \cite{Thorne}, replacing Theorem 7.2 of loc. cit. with Theorem \ref{Long} above. The idea is to construct a soluble extension $F'/F$ such that $\bar{\rho}|_{G_{F'}}$ satisfies the conditions of Theorem \ref{Long} above. We then apply Lemma \ref{BaseChange} to deduce the automorphy of $\rho$. We should note that in \cite{Thorne}, the author makes use of Corollary 7.4 in loc. cit., but that goes unchanged for us because that corollary made no assumptions on the quadratic subfield $K$. 
\end{proof}

\section{Application to Elliptic Curves}

We can apply this theorem to elliptic curves. In a paper of Freitas, Le Hung, and Siksek (see \cite{FLHS}) the authors prove there are only finitely many non-automorphic elliptic curves over any given totally real field. We can use the theorem above to prove automorphy of  elliptic curves  $E$ defined over totally real fields  $F$ in some cases not covered in the existing literature. This comes about by applying our theorem to the 7-adic representations arising from $E$. Note that  the assumptions of \cite{Thorne} imply that the theorem in that paper can be applied only to $p$-adic representations of elliptic curves with $p$ congruent to 1 modulo 4.  \\ \\ The following theorem is \cite{FLHS}, Theorems 3 and 4.
\begin{theorem}\label{AbsIrr} Let $p \in \{3,5,7\}$. Let $E$ be an elliptic field over a totally real field $F$ and let $\bar{\rho}_{E,p}: G_F \to \GL_2(\F_p)$ be the representation given by the action on the $p$-torsion of $E$. If $\bar{\rho}_{E,p}(G_{F(\zeta_p)})$ is absolutely irreducible, then $E$ is modular. 
\end{theorem}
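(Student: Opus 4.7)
Since the paper credits this theorem to \cite{FLHS} (Theorems 3 and 4), the natural approach is to reduce modularity of $E$ to residual modularity of $\bar\rho_{E,p}$ via a modularity lifting theorem, and then handle residual modularity by cases on $p$. The $p$-adic Tate module gives a continuous representation $\rho_{E,p}: G_F \to \GL_2(\Zp)$ which is almost everywhere unramified, de Rham at primes above $p$ with Hodge--Tate weights $\{0,1\}$, and totally odd (as $\det \rho_{E,p} = \epsilon$). Under the hypothesis that $\bar\rho_{E,p}|_{G_{F(\zeta_p)}}$ is absolutely irreducible, a Kisin--Geraghty-type modularity lifting theorem over totally real fields applies, so the problem reduces to showing that $\bar\rho_{E,p}$ arises as the residual representation of some cuspidal weight two automorphic representation of $\GL_2(\A_F)$.

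For $p=3$, I would invoke Langlands--Tunnell: the image of $\bar\rho_{E,3}$ in $\PGL_2(\F_3)\cong S_4$ is solvable, so the representation is automorphic after solvable base change, and by cyclic descent (Lemma \ref{BaseChange}) one obtains the desired automorphic form over $F$. For $p=5$ and $p=7$, I would use the classical Wiles switching trick: the twist $X_E$ of the modular curve $X(p)$ parametrizing elliptic curves $E'$ with a Weil-pairing-compatible identification $E'[p] \cong E[p]$ carries the tautological $F$-point coming from $E$ itself. For $p=5$, the genus-zero nature of $X(5)$ gives a rich supply of $F$-rational points on $X_E$, so one can pick $E'/F$ with $\bar\rho_{E',5} \cong \bar\rho_{E,5}$ and $\bar\rho_{E',3}$ absolutely irreducible; then the $p=3$ case applies to $E'$, giving residual modularity of $\bar\rho_{E,5}$.

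The main obstacle is $p=7$, where $X(7)$ has genus $3$ and producing $F$-rational points on the twist $X_E$ beyond the tautological one is a delicate arithmetic-geometric problem. The content of \cite{FLHS} Theorems 3 and 4 is precisely such a construction, via a careful analysis of $F$-rational points on quotients of $X(7)$ and on moduli of elliptic curves with prescribed $7$-torsion, yielding a $3$--$7$ or $5$--$7$ switch and hence residual modularity of $\bar\rho_{E,7}$. For the present paper I would treat these results as a black box and assemble the proof as sketched above; the hard work of producing the auxiliary curve at $p=7$ over an arbitrary totally real field is what distinguishes FLHS from the earlier prime cases, and it is the step I would not attempt to redo from scratch.
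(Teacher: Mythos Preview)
The paper gives no proof of this statement; it is simply quoted as Theorems~3 and~4 of \cite{FLHS} and used as a black box. Your proposal does essentially the same thing---you sketch the expected architecture (modularity lifting reduces to residual modularity, handled by Langlands--Tunnell for $p=3$ and prime-switching for $p=5,7$) and then explicitly defer the substantive $p=7$ step to \cite{FLHS}---so there is nothing to compare.
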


We will call an elliptic curve $p$-bad if $E[p]$ is an absolutely reducible $\F_p[G_{F(\zeta_p)}]$-module. Otherwise $E$ is $p$-good. The theorem tells us the only elliptic curves $E$ which are potentially non-modular are those which are $p$-bad for $p = 3,5$, and $7$. In \cite{Thorne}, the author deals with some of these remaining cases:
\begin{theorem} Let $E$ be an elliptic curve over a totally real field $F$. Suppose:
\begin{itemize}
\item[(1)] $5$ is not a square in $F$. 
\item[(2)] $E$ has no $F$-rational $5$-isogeny.
\end{itemize}
Then $E$ is modular. 
\end{theorem}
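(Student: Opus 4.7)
The plan is to reduce to the main automorphy lifting result, Theorem \ref{Aut}, after dispatching the easy case via Theorem \ref{AbsIrr}. By the latter, if $\bar\rho_{E,5}(G_{F(\zeta_5)})$ is absolutely irreducible then $E$ is modular. So I may assume $E$ is $5$-bad, i.e.\ $\bar\rho_{E,5}|_{G_{F(\zeta_5)}}$ is absolutely reducible.

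Under the no-$5$-isogeny hypothesis, $\bar\rho_{E,5}$ has no $G_F$-stable line, hence is irreducible as an $\F_5[G_F]$-module; being absolutely reducible on $G_{F(\zeta_5)}$, Clifford theory gives $\bar\rho_{E,5} \cong \Ind_{G_K}^{G_F} \bar\chi$ for a character $\bar\chi \colon G_K \to \overline{\F}_5^{\times}$, where $K$ is the unique quadratic subfield of $F(\zeta_5)/F$. The assumption that $5$ is not a square in $F$ forces $F \cap \Q(\zeta_5) = \Q$ (the only intermediate fields of $\Q(\zeta_5)/\Q$ being $\Q$, $\Q(\sqrt{5})$, and $\Q(\zeta_5)$), so $[F(\zeta_5):F] = 4$ and $K = F(\sqrt{5})$, which is totally real.

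Now I apply Theorem \ref{Aut} to $\rho_{E,5}$. Conditions (1)--(3) are standard properties of elliptic curves over totally real fields. For condition (4), $\bar\rho_{E,5}$ is absolutely irreducible because $\bar\chi \neq \bar\chi^w$ (otherwise the induced representation would be reducible), and the noncontainment $F(\zeta_5) \not\subset K(\bar\gamma\epsilon) \cap K(\bar\gamma\epsilon^{-1})$ is automatic in the totally real case: as explained in the introduction of this paper, oddness of $\bar\rho$ combined with $c \in G_K$ (available because $K$ is totally real) yields via Chebotarev a place $v$ splitting in $K$ with $q_v \equiv -1 \pmod 5$ and eigenvalue ratio $-1 \equiv q_v$, which is exactly the condition equivalent to the noncontainment.

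The main remaining obstacle is verifying that the two characters appearing in $\bar\rho_{E,5}|_{G_{F(\zeta_5)}}$ are distinct, i.e.\ that $\bar\gamma = \bar\chi/\bar\chi^w$ is nontrivial on $G_{F(\zeta_5)}$. If this failed, $\bar\gamma$ would have order exactly $2$ and cut out $F(\zeta_5)/K$, forcing $\bar\rho_{E,5}|_{G_{F(\zeta_5)}}$ to be scalar and the projective image of $\bar\rho_{E,5}$ to be cyclic of order dividing $4$; this is a very restrictive small-image situation for an elliptic curve with no $F$-rational $5$-isogeny, and can be excluded or handled separately by classical results on elliptic curves with small mod-$5$ Galois image. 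Once this degenerate case is dispatched, Theorem \ref{Aut} produces the desired cuspidal automorphic representation $\pi$ and an isomorphism $\rho_{E,5} \cong r_{\iota}(\pi)$, completing the proof.
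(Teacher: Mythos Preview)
The paper does not actually prove this statement; it is quoted as a result of Thorne (\cite{Thorne}), with no proof given here. So there is no ``paper's proof'' to compare against beyond the implicit pointer to Thorne's argument, which proceeds via his automorphy lifting theorem---of which Theorem \ref{Aut} in this paper is a strict generalization. Your strategy (dispatch the $5$-good case by Theorem \ref{AbsIrr}, then apply Theorem \ref{Aut} in the $5$-bad case, using that $K=F(\sqrt{5})$ is totally real so hypothesis (4) is automatic) is exactly the right shape and matches how the $p=5$ case falls out of Thorne's framework.

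There is, however, a genuine gap in your write-up. You correctly isolate the problematic case where $\bar\gamma|_{G_{F(\zeta_5)}}$ is trivial, i.e.\ $\bar\rho_{E,5}|_{G_{F(\zeta_5)}}$ is scalar, but then you wave it away with ``can be excluded or handled separately by classical results.'' In fact you have already written down the contradiction without noticing it: if $\bar\rho_{E,5}|_{G_{F(\zeta_5)}}$ is scalar then the projective image of $\bar\rho_{E,5}$ factors through the cyclic group $\Gal(F(\zeta_5)/F)$, hence is cyclic; but a representation with cyclic projective image has abelian image and is therefore absolutely reducible, contradicting the no-$5$-isogeny hypothesis together with oddness (an odd irreducible $\bar\rho\colon G_F\to\GL_2(\F_5)$ is automatically absolutely irreducible, since otherwise its image lies in a nonsplit torus and complex conjugation would have determinant $+1$). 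So no appeal to unspecified classical results is needed---just finish the sentence you started. With that one line added, your argument is complete.
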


We can prove a similar theorem, but before doing so we recall \cite{FLHS}, Proposition 9.1.
\begin{proposition} Let $F$ be a totally real number field and let $E$ be an elliptic curve over $F$. Suppose $F \cap \Q(\zeta_7) = \Q$ and write $\bar{\rho} = \bar{\rho}_{E,7}$. Suppose $\bar{\rho}$ is irreducible but $\bar{\rho}(G_{F(\zeta_7)})$ is absolutely reducible. Then $\bar{\rho}(G_F)$ is conjugate in $\GL_2(\F_7)$ to one of the groups
$$H_1 = \left\langle \MatTwo{3}{0}{0}{5}, \MatTwo{0}{2}{2}{0} \right\rangle, \quad H_2 = \left\langle \MatTwo{0}{5}{3}{0},\MatTwo{5}{0}{3}{2} \right\rangle.$$ The group $H_1$ has order $36$ and is contained as a subgroup of index $2$ in the normalizer of a split Cartan subgroup. The group $H_2$ has order $48$ and is contained as a subgroup of index $2$ in the normalizer of a non-split Cartan subgroup. The images of $H_1$ and $H_2$ in $\PGL_2(\F_7)$ are isomorphic to $D_3 \cong S_3$ and $D_4$, respectively. 
\end{proposition}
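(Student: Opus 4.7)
The plan is to combine Clifford theory with explicit constraints coming from the Weil pairing, complex conjugation, and the arithmetic hypothesis $F \cap \Q(\zeta_7) = \Q$. Since $\bar\rho = \bar\rho_{E,7}$ is irreducible but $\bar\rho|_{G_{F(\zeta_7)}}$ is absolutely reducible, Clifford theory (exactly as in Section 2 above) gives $\bar\rho \cong \Ind_{G_K}^{G_F} \bar\chi$ for a character $\bar\chi \colon G_K \to \overline{\F}_7^{\times}$, where $K$ is the unique quadratic subfield of $F(\zeta_7)/F$. The hypothesis $F \cap \Q(\zeta_7) = \Q$ identifies $K = F(\sqrt{-7})$, a CM extension of $F$; in particular every complex conjugation $c \in G_F$ has nontrivial image in $\Gal(K/F)$, so $\bar\rho(c)$ is an involution of determinant $-1$ lying outside the Cartan containing $\bar\rho(G_K)$. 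Consequently $\bar\rho(G_F) \subset N(C)$ for some Cartan $C$ --- split if $\bar\chi(G_K) \subset \F_7^{\times}$ and nonsplit (conjugate under $\F_{49}/\F_7$) otherwise --- and the projection onto $N(C)/C \cong \Z/2$ is surjective by the irreducibility of $\bar\rho$.

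Next I would extract quantitative information from the Weil pairing. Since $\det \bar\rho = \epsilon$ and $\epsilon \colon G_F \to \F_7^{\times}$ is surjective under $F \cap \Q(\zeta_7) = \Q$, the determinant on $\bar\rho(G_F)$ is surjective and $|\bar\rho(G_F)| = 6 \cdot |\bar\rho(G_{F(\zeta_7)})|$. On $G_K$ this reads $\bar\chi \bar\chi^w = \epsilon|_{G_K}$, whose image is the subgroup of squares $\{1,2,4\} \subset \F_7^{\times}$ (of order $3$, since $[F(\zeta_7):K]=3$). Combined with the antidiagonal involution of determinant $-1$ produced by $c$, these constraints force $\bar\rho(G_F)$, up to $\GL_2(\F_7)$-conjugation, to equal the subgroup $H_1 \subset N(T)$ of order $36$ in the split case, and the subgroup $H_2 \subset N(C_{\mathrm{ns}})$ of order $48$ in the nonsplit case. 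The structural assertions (index $2$ in the respective Cartan normalizer of order $72$ or $96$, and images in $\PGL_2(\F_7)$ isomorphic to $D_3 \cong S_3$ and $D_4$) are then confirmed by direct matrix computation with the given generators: for example $\mathrm{diag}(3,5)^3 = -I$ and $\bigl(\begin{smallmatrix}0 & 2 \\ 2 & 0\end{smallmatrix}\bigr)^2 = 4I$, from which $H_1$ contains all six scalars and so $H_1/Z \cong D_3$ of order $6$; the nonsplit case is analogous.

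The main obstacle is the uniqueness step in the middle paragraph, i.e., showing that no other subgroup of $N(T)$ (respectively of $N(C_{\mathrm{ns}})$) can arise as the Galois image. The key observation is that any proper subgroup of $H_i$ containing $\bar\rho(G_K)$ together with the antidiagonal involution of determinant $-1$ would have determinant image properly contained in $\F_7^{\times}$, contradicting the surjectivity of $\epsilon$ under the hypothesis $F \cap \Q(\zeta_7) = \Q$. A finite enumeration of subgroups of $N(T)$ and $N(C_{\mathrm{ns}})$ meeting all of these requirements --- together with a check that within each fixed Cartan normalizer exactly one $\GL_2(\F_7)$-conjugacy class meets them --- completes the proof.
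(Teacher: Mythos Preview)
The paper does not actually prove this proposition; it is quoted from \cite{FLHS}, Proposition~9.1, and used as a black box in the proof of Theorem~\ref{Main}. So there is no argument in the present paper to compare yours against. Your overall strategy---Clifford theory to place the image inside a Cartan normalizer, then constraints from the surjectivity of $\det\bar\rho=\epsilon$ and from complex conjugation, followed by a finite subgroup enumeration---is exactly the shape of argument one expects, and is in the spirit of the cited reference.

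That said, there is a genuine gap in your uniqueness step. The sentence ``any proper subgroup of $H_i$ containing $\bar\rho(G_K)$ together with the antidiagonal involution of determinant $-1$ would have determinant image properly contained in $\F_7^\times$'' is false as written. Inside $N(T)$, the subgroup generated by $\{\mathrm{diag}(a,b):a,b\in\{1,2,4\}\}$ together with $\bigl(\begin{smallmatrix}0&1\\1&0\end{smallmatrix}\bigr)$ has order~$18$, has surjective determinant, contains an antidiagonal involution of determinant $-1$, meets $T$ in an index-$2$ swap-stable subgroup with $\det$-image $\{1,2,4\}$, and has nonscalar intersection with $\SL_2$---yet it is a proper subgroup of $H_1$ (it omits $-I$, for instance). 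So the finite enumeration you defer cannot be short-circuited by the determinant argument you give; pinning down the image really does require the full case check (or some further arithmetic input). A smaller point: you invoke ``Clifford theory as in Section~2,'' but that section already \emph{assumes} the restriction is a sum of two distinct characters, whereas here the hypothesis is only absolute reducibility; you should first rule out the scalar case, which is immediate since it would force the projective image to be a quotient of the cyclic group $\Gal(F(\zeta_7)/F)$, contradicting absolute irreducibility.
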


\begin{theorem}\label{Main} Let $F$ be a totally real field, and let $E$ be an elliptic curve over $F$. Suppose:
\begin{itemize}
\item[(1)] $F \cap \Q(\zeta_7) = \Q$. 
\item[(2)] $E$ has no $F$-rational $7$-isogeny.
\item[(3)] Either $\bar{\rho}_{E,7}(G_{F(\zeta_7)})$ is absolutely irreducible, or it is reducible and $\bar{\rho}_{E,7}(G_F)$ is conjugate to the group $H_1$ from the previous proposition. 
\end{itemize}
Then $E$ is modular. 
\end{theorem}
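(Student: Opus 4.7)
The plan is to bifurcate according to hypothesis (3). When $\bar\rho_{E,7}(G_{F(\zeta_7)})$ is absolutely irreducible, modularity follows at once from Theorem \ref{AbsIrr} with $p=7$. The substantive case is when $\bar\rho_{E,7}(G_F)$ is conjugate to $H_1$, and I will verify the hypotheses of Theorem \ref{Aut} applied to $\rho = \rho_{E,7}$. Conditions (1)--(3) of that theorem (unramified outside a finite set; de Rham at $v \mid 7$ with Hodge--Tate weights $\{0,1\}$; $\det\rho(c)=-1$) are standard for the $7$-adic Tate module of an elliptic curve. Hypothesis (2) of our theorem rules out an $F$-rational $7$-isogeny, hence $\bar\rho_{E,7}$ is $\F_7$-irreducible; moreover, inside $H_1$ the antidiagonal element $s = \MatTwo{0}{2}{2}{0}$ swaps the eigenlines of the diagonal element $t = \MatTwo{3}{0}{0}{5}$, so no line in $\overline{\F}_7^{\,2}$ is $G_F$-stable and $\bar\rho_{E,7}$ is absolutely irreducible.

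The remainder of hypothesis (4) requires an explicit calculation with $H_1$. Because $F \cap \Q(\zeta_7) = \Q$, the character $\epsilon = \det\bar\rho_{E,7}$ is surjective onto $\F_7^\times$, and the unique quadratic subfield of $F(\zeta_7)/F$ is $K = F(\sqrt{-7})$; the image $H_0$ of $G_K$ is the subgroup of $H_1$ consisting of elements of square determinant. Using the relations $sts^{-1} = t^{-1}$ and $s^2 = 4I$, every element of $H_1$ may be uniquely written as $t^i s^j$ with $0 \le i, j \le 5$, and $\det(t^i s^j) = 3^j$, so $H_0 = \{t^i s^{2k} : i \in \Z/6,\ k \in \Z/3\}$ and consists entirely of diagonal matrices (since $s^2$ is a scalar). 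Hence $\bar\rho_{E,7}|_{G_K} \cong \bar\chi \oplus \bar\chi^w$ with $\bar\chi(t^i s^{2k}) = 3^i 4^k$ and $\bar\chi^w(t^i s^{2k}) = 5^i 4^k$, yielding (in $\F_7^\times$, using $3 \cdot 5^{-1} = 2$ and $4^2 = 2$) the formulas $\bar\gamma(t^i s^{2k}) = 2^i$ and $\epsilon(t^i s^{2k}) = 2^k$. Restricting to $G_{F(\zeta_7)} = \ker\epsilon$ (the locus $k=0$), the restriction $\bar\gamma|_{G_{F(\zeta_7)}}$ is the order-$3$ character $t^i \mapsto 2^i$, so in particular the two summands of $\bar\rho_{E,7}|_{G_{F(\zeta_7)}}$ are distinct characters.

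Finally, to verify $F(\zeta_7) \not\subset K(\bar\gamma\epsilon) \cap K(\bar\gamma\epsilon^{-1})$, pick $\tau \in G_K$ whose image in $H_0$ is $t^2 s^2$: then $\bar\gamma(\tau)\epsilon(\tau) = 2^2 \cdot 2 = 2^3 = 1$ in $\F_7^\times$ while $\epsilon(\tau) = 2 \neq 1$, so $\tau \in \ker(\bar\gamma\epsilon) \setminus \ker\epsilon$, giving $G_{K(\bar\gamma\epsilon)} \not\subset G_{F(\zeta_7)}$ and hence $F(\zeta_7) \not\subset K(\bar\gamma\epsilon)$. All hypotheses of Theorem \ref{Aut} are thereby verified, producing a cuspidal automorphic representation $\pi$ of $\GL_2(\A_F)$ of weight $2$ and an isomorphism $\iota$ with $\rho_{E,7} \cong r_\iota(\pi)$, whence $E$ is modular. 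The main obstacle is precisely the explicit group-theoretic verification of condition (4); the clarifying observation is that $\bar\gamma|_{G_K}$ and $\epsilon|_{G_K}$ arise as the two independent coordinates $(2^i, 2^k)$ on $H_0 \cong \Z/6 \times \Z/3$, so combinations such as $\bar\gamma\epsilon = 1$ can be arranged without simultaneously forcing $\epsilon = 1$.
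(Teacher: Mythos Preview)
Your proof is correct and follows essentially the same route as the paper: bifurcate on hypothesis (3), invoke Theorem~\ref{AbsIrr} in the absolutely irreducible case, and in the $H_1$ case verify the hypotheses of Theorem~\ref{Aut}, with the only real work being condition~(4). The minor difference is in how condition~(4) is checked: the paper argues field-theoretically that $K(\bar\gamma)\cap F(\zeta_7)=K$ with $[K(\bar\gamma):K]=[F(\zeta_7):K]=3$ and then appeals (implicitly) to the remark that $(\#\epsilon(G_L),\#\bar\gamma(G_L))>1$ suffices, whereas you parametrize $H_0$ explicitly and exhibit a concrete element $\tau$ mapping to $t^2 s^2$ with $\bar\gamma\epsilon(\tau)=1$ but $\epsilon(\tau)\neq 1$; both arguments amount to the same observation that $\bar\gamma$ and $\epsilon$ are independent order-$3$ characters on $G_K$.
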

\begin{proof} Let $\rho: G_F \to \GL_2(\Q_7)$ be the representation given by the action of $G_F$ on the \'{e}tale cohomology $H^1(E_{\overline{F}},\Z_7)$, after a choice of basis. The goal is to show $\rho$ is automorphic. Hypothesis (2) is equivalent to $\bar{\rho}$ being irreducible, hence absolutely irreducible because of complex conjugation. If $\bar{\rho}|_{G_{F(\zeta_7)}}$ is irreducible, then $\rho$ is automorphic by Theorem \ref{AbsIrr} above. We now deal with the case when this restriction is not irreducible. \\ \\ If $\bar{\rho}|_{G_{F(\zeta_7)}}$ is absolutely reducible, the third hypothesis combined with the previous proposition gives that the projective image of $\bar{\rho}$ in $\PGL_2(\F_7)$ is isomorphic to $D_3$. This implies that $\bar{\rho}|_{G_{F(\zeta_7)}}$ cannot be scalar since $\Gal(F(\zeta_7)/F)$ is cyclic, and therefore cannot surject onto $D_3$. \\ \\ Let $K$ be the quadratic subfield of $F(\zeta_7)/F$, so that $[F(\zeta_7):K] = 3$ by hypothesis (1). Let $\bar{\gamma}: G_K \to \F_7^{\times}$ be the character which gives the ratio of the eigenvalues. We need to examine the subgroup
$$H_1 = \left \langle \MatTwo{3}{0}{0}{5}, \MatTwo{0}{2}{2}{0} \right \rangle \subset \GL_2(\F_7).$$ It is easy to check that these two matrices generate the projective image as well. By simply checking the ratio of eigenvalues of each of the matrices, one can check that the possible values for the image of $\bar{\gamma}$ are elements of $\{1,2,4\}$. Therefore, $[K(\bar{\gamma}):K]=3$ or $1$. However, it cannot be the latter as $\overline{\gamma}$ is nontrivial as a character on $G_K$ by assumption. Therefore $[K(\bar{\gamma}):K] = 3$, so $K(\bar{\gamma}) \cap F(\zeta_7) = K$ or $F(\zeta_7)$. But we know it cannot be $F(\zeta_7)$ since the image of $\bar{\rho}|_{G_{F(\zeta_7)}}$ is non-scalar. Thus, $K(\bar{\gamma})$ is disjoint over $K$ from $F(\zeta_7)$ and $[K(\bar{\gamma}):K] = [F(\zeta_7):K] = 3$. Thus, hypothesis (4) of the main theorem above is satisfied, and the theorem implies $E$ is modular. 
\end{proof}

We end by remarking that we can extend Theorem \ref{Main} to primes other than $p=7$.
\begin{theorem} Let $F$ be a totally real field, and let $E$ be an elliptic curve over $F$. Let $p \geq 7$ be a prime such that $(p-1)/2 = q^n$ for some odd prime $q$ and $n \geq 1$. Suppose:
\begin{itemize}
\item[(1)] $F \cap \Q(\zeta_p) = \Q$. 
\item[(2)] $E$ has no $F$-rational $p$-isogeny.
\item[(3)] $\bar{\rho}_{E,p}(G_{F})$ normalizes a split Cartan subgroup of $\GL_2(\F_p)$.
\end{itemize}
Then $E$ is modular. 
\end{theorem}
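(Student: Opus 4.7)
The plan is to apply Theorem~\ref{Aut} to the $p$-adic representation $\rho = \rho_{E,p}$. Conditions (1)--(3) of Theorem~\ref{Aut} are automatic for elliptic curves, so it suffices to verify condition (4). Let $\bar{\rho} = \bar{\rho}_{E,p}$ and let $K$ denote the quadratic subfield of $F(\zeta_p)/F$; by hypothesis (1), $[F(\zeta_p):K] = q^n$. By (2), $\bar{\rho}$ is absolutely irreducible; by (3), $\bar{\rho}(G_F) \subset N(C)$ for some split Cartan $C \subset \GL_2(\F_p)$, and $\bar{\rho}(G_F) \not\subset C$ by irreducibility.

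The first step, paralleling the proof of Theorem~\ref{Main}, is to show $\bar{\rho}|_{G_K}$ is reducible. Write $H = \bar{\rho}(G_F)$ and $H_0 = H \cap C$; then $[H:H_0] = 2$, and since $\det \bar{\rho} = \epsilon$ surjects onto $\F_p^\times$ by hypothesis (1), the image $\det(H_0)$ is a subgroup of $\F_p^\times$ of index $1$ or $2$. I would aim to show $\det(H_0) = (\F_p^\times)^2$, which is equivalent to the quadratic character $G_F \to H/H_0 \cong \Z/2$ coinciding with $\epsilon^{(p-1)/2} = \delta_{K/F}$, and forces $\bar{\rho}(G_K) \subset C$. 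This is the main obstacle and requires a structural analysis of the possible images $\bar{\rho}_{E,p}(G_F) \subset N(C)$, generalizing the classification of images $H_1, H_2$ from \cite{FLHS} used in the proof of Theorem~\ref{Main}. The hypothesis $(p-1)/2 = q^n$ is essential here: the squares $(\F_p^\times)^2$ form a cyclic $q$-group of order $q^n$, and this arithmetic structure should force the image inside $N(C)$ to be of the ``$H_1$-like'' form rather than all of $N(C)$. Granting this, $\bar{\rho}|_{G_K} \cong \bar{\chi} \oplus \bar{\chi}^w$ and $\bar{\gamma} = \bar{\chi}/\bar{\chi}^w$ is nontrivial by irreducibility.

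Next I verify that $\bar{\gamma}|_{G_{F(\zeta_p)}}$ is nontrivial, which will ensure $\bar{\rho}|_{G_{F(\zeta_p)}}$ is a sum of \emph{distinct} characters. The projective image of $\bar{\rho}$ in $\PGL_2(\F_p)$ sits inside $N(C)/Z \cong D_{p-1}$ and contains a reflection (since $\bar{\rho}(G_F) \not\subset C$). If it were cyclic, it would be of order $2$, forcing $\bar{\rho}(G_K) \subset Z$ and contradicting $\bar{\chi} \neq \bar{\chi}^w$. So the projective image is dihedral $D_m$ for some $m \geq 2$. Since $\Gal(F(\zeta_p)/F)$ is cyclic and the only normal subgroup of $D_m$ with cyclic quotient is the full rotation subgroup $\Z/m$, the projective image of $G_{F(\zeta_p)}$ equals $\Z/m$, which has order $m \geq 2$; hence $\bar{\rho}|_{G_{F(\zeta_p)}}$ is non-scalar and $\bar{\gamma}|_{G_{F(\zeta_p)}} \neq 1$.

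Finally, since $\epsilon|_{G_{F(\zeta_p)}} = 1$, we have $(\bar{\gamma}\epsilon^{\pm 1})|_{G_{F(\zeta_p)}} = \bar{\gamma}|_{G_{F(\zeta_p)}} \neq 1$, so $F(\zeta_p) \not\subset K(\bar{\gamma}\epsilon)$ and $F(\zeta_p) \not\subset K(\bar{\gamma}\epsilon^{-1})$, in particular not in their intersection, verifying (4b). Theorem~\ref{Aut} then applies and concludes that $\rho$ is automorphic, i.e., $E$ is modular.
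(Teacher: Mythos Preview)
Your final paragraph contains a direction-of-inclusion error that breaks the argument. From $(\bar{\gamma}\epsilon^{\pm 1})|_{G_{F(\zeta_p)}}=\bar{\gamma}|_{G_{F(\zeta_p)}}\neq 1$ you may only conclude $G_{F(\zeta_p)}\not\subset\ker(\bar{\gamma}\epsilon^{\pm 1})$, i.e.\ $K(\bar{\gamma}\epsilon^{\pm 1})\not\subset F(\zeta_p)$. Condition (4) of Theorem~\ref{Aut} asks for the \emph{opposite} inclusion $F(\zeta_p)\not\subset K(\bar{\gamma}\epsilon^{\pm 1})$, which unwinds to $\ker(\bar{\gamma}\epsilon^{\pm 1}|_{G_K})\not\subset G_{F(\zeta_p)}$, i.e.\ the existence of $\sigma\in G_K$ with $\bar{\gamma}(\sigma)=\epsilon(\sigma)^{\mp 1}$ and $\epsilon(\sigma)\neq 1$. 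Your argument produces no such $\sigma$. A telling symptom is that your verification of (4) never uses the hypothesis $(p-1)/2=q^n$; if it were valid, the theorem would hold for all odd $p$, which is not what is being claimed.

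The paper's route to (4) is different and is exactly where $(p-1)/2=q^n$ enters. From $\bar{\chi}\bar{\chi}^w=\det\bar{\rho}|_{G_K}=\epsilon$ one gets $\bar{\gamma}=\bar{\chi}^2\epsilon^{-1}$, so $\bar{\gamma}$ takes values in $(\F_p^\times)^2$, a cyclic group of order $q^n$. Hence $[K(\bar{\gamma}):K]$ is a nontrivial power of $q$, as is $[F(\zeta_p):K]=q^n$; since $\bar{\gamma}|_{G_{F(\zeta_p)}}\neq 1$ gives $K(\bar{\gamma})\not\subset F(\zeta_p)$, the field $L=K(\bar{\gamma})\cap F(\zeta_p)$ is proper in both, so $q$ divides $\#\epsilon(G_L)=[F(\zeta_p):L]$ and $\#\bar{\gamma}(G_L)=[K(\bar{\gamma}):L]$. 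The sufficient criterion $(\#\epsilon(G_L),\#\bar{\gamma}(G_L))>1$ then yields (4). As for your ``main obstacle'' in step (2), no separate structural analysis is needed: if $\bar{\rho}|_{G_{F(\zeta_p)}}$ is absolutely irreducible one is done by Theorem~\ref{AbsIrr}, and otherwise it is a sum of two distinct characters (non-scalar, by your step (3)), whereupon Clifford theory already forces $\bar{\rho}\cong\Ind_{G_K}^{G_F}\bar{\chi}$ since $K$ is the \emph{unique} quadratic subextension of $F(\zeta_p)/F$.
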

\begin{proof} Let $\rho: G_F \to \GL_2(\Q_p)$ be the representation given by the action of $G_F$ on the \'{e}tale cohomology $H^1(E_{\overline{F}},\Z_p)$, after a choice of basis. The goal is to show $\rho$ is automorphic. Hypothesis (2) is equivalent to $\bar{\rho}$ being irreducible, hence absolutely irreducible because of complex conjugation. Hypothesis (3) says $\bar{\rho}_{E,p}(G_F)$ is contained in the normalizer of a split Cartan subgroup. Thus, $\bar{\rho}(G_{F(\zeta_p)})$ cannot be scalar, since $\Gal(F(\zeta_p)/F)$ is cyclic, and hence cannot surject onto a non-cylic group. \\ \\ Let $K$ be the quadratic subfield of $F(\zeta_p)/F$, so that $[F(\zeta_p):K] = q^n = (p-1)/2$ by hypothesis (1). Let $\bar{\gamma}: G_K \to \F_p^{\times}$ be the character which gives the ratio of eigenvalues of $\bar{\rho}|_{G_K}$. We want to examine $[K(\bar{\gamma}):K]$, where $K(\bar{\gamma}) = \bar{F}^{\ker(\bar{\gamma})}$ as always. In particular, we will show that $K(\bar{\gamma}) \cap F(\zeta_p)$ is a field $L$ which satisfies $(\# \epsilon(G_L),\#\bar{\gamma}(G_L)) > 1$, which implies hypothesis (4) of the main theorem. Note that hypothesis (1) implies that, as a character of $G_K$, that $\epsilon$ takes values in $(\F_p^{\times})^2$. \\ \\ Using the fact that $\det \bar{\rho}$ is the mod $p$ cyclotomic character, we find that $\bar{\chi} \bar{\chi}^w = \epsilon$, so that $\bar{\gamma} = \bar{\chi}/\bar{\chi}^w = \bar{\chi}^2 \epsilon^{-1}$, which is a character $G_K \to (\F_p^{\times})^2$. Thus, the order of $\bar{\gamma}$ divides $(p-1)/2 = q^n$, and moreover cannot equal $1$ as $\bar{\gamma}$ is a nontrivial character of $G_K$. Thus, $1<[K(\bar{\gamma}):K] \big| q^n$. Moreover, $[F(\zeta_p):K] = q^n$ by hypothesis (1) of the theorem. Lastly, we know $K(\bar{\gamma}) \not \subseteq F(\zeta_p)$ since $\bar{\gamma}$ is nontrivial upon restriction to $G_{F(\zeta_p)}$, and thus $K(\bar{\gamma}) \cap F(\zeta_p)$ is neither $K(\bar{\gamma})$ nor $F(\zeta_p)$. This intersection is therefore a field $L$ which satisfies $(\#\epsilon(G_L),\# \bar{\gamma}(G_L))>1$ as $q$ divides both quantities. Thus, hypothesis (4) of the main theorem is satisfied, and therefore $E$ is modular. 
\end{proof}
\bibliographystyle{alpha}{}
\bibliography{Master}
\nocite{*}

\end{document}